\newtheorem{theorem}{Theorem}
\newtheorem{lemma}{Lemma}
\newtheorem{corollary}{Corollary}
\newtheorem{conjecture}{Conjecture}
\DeclareMathOperator*{\argmin}{arg\,min}
\DeclarePairedDelimiter{\ceil}{\lceil}{\rceil}
\DeclarePairedDelimiter{\floor}{\lfloor}{\rfloor}
\newcommand{\raisemath}[1]{\mathpalette{\raisem@th{#1}}}
\newcommand{\raisem@th}[3]{\raisebox{#1}{$#2#3$}}
\newcommand*{\textoverline}[1]{$\overline{\hbox{#1}}\m@th$}
\newcommand{\kls}{\textsc{K$^\textnormal{th}$Largest Subset}}
\newcommand{\lcm}{\textnormal{lcm}}
\newcommand{\myif}{\textnormal{if }}
\newcommand{\longunderline}[1]{\uline{#1\hfill\mbox{}}}
\renewcommand{\times}{\cdot}
\begin{document}
	
	\title{Further Results on an Abstract Model for Branching and its Application
    to Mixed-Integer Programming}
  
	\author{\normalsize Daniel Anderson$^\dagger$ \\ \normalsize Carnegie Mellon University \\ \normalsize dlanders@cs.cmu.edu \and \normalsize Pierre Le Bodic \\ \normalsize Monash University \\ \normalsize pierre.lebodic@monash.edu \and \normalsize Kerri Morgan$^\dagger$ \\ \normalsize Deakin University \\ \normalsize kerri.morgan@deakin.edu.au}
    \date{}
    
    \footnotetext{$^\dagger$Work completed in part while the author was employed at Monash University}
    
	\maketitle
  
  \begin{abstract}
    A key ingredient in branch and bound (B\&B) solvers for mixed-integer programming (MIP) is the selection of branching variables since poor or arbitrary selection can affect the size of the resulting search trees by orders of magnitude. A recent article by Le Bodic and Nemhauser [{\it Mathematical Programming}, (2017)] investigated variable selection rules by developing a theoretical model of B\&B trees from which they developed some new, effective scoring functions for MIP solvers. In their work, Le Bodic and Nemhauser left several open theoretical problems, solutions to which could guide the future design of variable selection rules. In this article, we first solve many of these open theoretical problems. We then implement an improved version of the model-based branching rules in SCIP 6.0, a state-of-the-art academic MIP solver, in which we observe an $11\%$ geometric average time and node reduction on instances of the MIPLIB 2017 Benchmark Set that require large B\&B trees.
  \end{abstract}

	\section{Introduction}
	
	Modern mixed-integer programming (MIP) solvers and many other combinatorial optimisation technologies are driven by the branch and bound (B\&B) method \cite{land1960automatic}. In MIP solvers, B\&B consists in solving a linear program (LP) relaxation of the MIP and recursively splitting the domains of integer variables whose values are fractional in the solution to the relaxation. The choice of fractional variable to branch on can drastically affect the size of the resulting search trees, potentially leading to orders of magnitude variations in solving time. Modern branching rules prefer branching on variables that most improve the dual bound at the created children, and thus, for the purpose of ranking them, reduce candidate variables to couples of values corresponding to their estimated dual gap improvements. The analysis and application of these couples of improvements, or so called \emph{branching tuples} for n-ary branching, is studied extensively in the context of Satisfiability by Kullman \cite{kullmann2009fundaments}, where it is shown that their quality can be essentially described by a single corresponding real number called the $\tau$-value. Kullman shows that various analytic properties of the $\tau$-value can help theoretically explain why certain branching rules work better than others. Le Bodic and Nemhauser \cite{LeBodic2017} study an abstract model of B\&B trees, and characterize the asymptotic growth rate of the trees resulting from a single variable by the \emph{ratio} value $\varphi,$ which is related to the $\tau$-value of Kullman \cite{kullmann2009fundaments}. Le Bodic and Nemhauser demonstrate experimentally that branching rules incorporating the $\varphi$-value and the abstract tree model perform on average better than the default rules implemented in the MIP solver SCIP 3.1.1 \cite{achterberg2009scip}. As part of their experimental work, they use their \emph{general variable branching} (GVB) model as a simulation problem in order to tune the branching rules that they subsequently implement in SCIP. In their analysis, Le Bodic and Nemhauser leave open several problems related to the model:
  \begin{itemize}[leftmargin=*]
    \item Is there a closed-form formula for computing the ratio $\varphi$?
    \item They conjecture that all instances of the Multiple Variable Branching (MVB) problem admit a single variable that is optimal to branch on for all sufficiently large values of the dual gap.
    \item They pose the question of whether GVB admits tighter hardness results, and whether it admits an approximation algorithm.
  \end{itemize}
  
  \noindent The theoretical contributions of this paper are as follows:
  \begin{itemize}[leftmargin=*]
    \item We show that there exists no closed form formula for the ratio $\varphi$ in general (Section \ref{sec:svb}).
    \item We resolve the \emph{MVB conjecture}, showing that it does not hold in general (Section \ref{sec:mvb}).
    \item We show that GVB admits a Karp reduction from the complement of the classic \kls{} problem, and that it is consequently also PP-hard under polynomial time Turing reductions (Section \ref{sec:gvb}).
    \item We show that Proposition 3 of Le Bodic and Nemhauser \cite{LeBodic2017} is actually false (Section \ref{sec:falseprop}).
    \item We give an analysis of the GVB simulation problem, showing that it can be solved in expected sub-exponential time in $n$ for scoring functions that respect dominance, which improves on the naive exponential time bound (Section \ref{sec:smaller_dp_state_space}).
  \end{itemize}

  \noindent Our practical contributions are:
  \begin{itemize}[leftmargin=*]
    \item an improved implementation of the model-based branching rules in the modern academic MIP solver SCIP 6.0 \cite{GleixnerEtal2018OO} (Section \ref{sec:compute_phi}), now released as part of SCIP 7 \cite{gamrath2020a},
    \item experimental results that demonstrate an $11\%$ geometric average speedup and tree size reduction for problems in the MIPLIB 2017 Benchmark Set \cite{miplib2017} that required large B\&B trees (Section \ref{sec:mip_tests}).
  \end{itemize}
	
	\noindent These practical results were not simply performed for the sake of reproducing the results of \cite{LeBodic2017}.
	Indeed, the experiments of \cite{LeBodic2017} were performed in conditions that best verify the developed theory.
	For instance, measures were taken to reduce performance variability (e.g. a bound cutoff was provided), but this in turn obfuscated the extent to which the improvements found would extend to real-world use cases.
	By contrast, this paper uses SCIP ``as is''.
	Furthermore, we use the most recent state-of-the-art test set, MIPLIB 2017 \cite{miplib2017}, and the latest (as of the time of writing) SCIP major version, 6.0 \cite{GleixnerEtal2018OO}, and with the help of the SCIP team, we ran the experiments in the conditions that are used for the development of SCIP.
	Finally, we have improved the implementation of the new branching rules. 
	To the extent that our results are comparable to those of \cite{LeBodic2017} on instances that require large trees, the new implementation significantly improves performance (from $5\%$ in \cite{LeBodic2017} to $11\%$ improvement in geometric mean over SCIP's default).
	
	The practical significance of these results underlines the importance of further foundational research in the line of \cite{LeBodic2017}.
	Hence we address some of the questions left unanswered by Le Bodic and Nemhauser \cite{LeBodic2017}.
	In particular we prove that there is in general an unavoidable overhead to using the new branching rules, as numerical methods must be invoked for the computation of the ratio $\varphi$.
	We further prove that the MVB and GVB problems, essential to the comparison and calibration simulations of the branching rules, are more complex than \cite{LeBodic2017} revealed.
	Nevertheless, we show how these simulations can be improved for GVB via an interesting analysis of the variable space.
	These new simulations confirm the experimental results found in \cite{LeBodic2017} and in this paper.
	
	\subsection{State-of-the-art branching strategies}
	
	The branching strategy employed by the MIP solver SCIP \cite{GleixnerEtal2018OO} is a strategy called \emph{hybrid branching} \cite{achterberg2009hybrid}, which consists of \emph{reliability pseudocost branching} \cite{achterberg2005a} augmented with several techniques from satisfiability and constraint programming. Reliability pseudocost branching consists in initially performing \emph{strong branching} \cite{applegate1995finding}, where candidate variables are branched on in order to measure the resulting change in the dual gap. Since strong branching is computationally expensive, it is substituted with \emph{pseudocost branching} \cite{benichou1971experiments} once enough is known about the historical dual bound changes that have been exhibited by the candidates in order to estimate future changes.
	Techniques from satisfiability and constraint programming are used to further improve the estimates, yielding the state-of-the-art hybrid strategies. For a thorough description, the PhD thesis by Achterberg \cite{achterberg2007thesis} provides discussion and benchmarks of these branching rules as used in SCIP.
  
  Given the (estimated) dual bound changes as measured by the branching rule, candidate variables are ranked based on a \emph{scoring function}, and the best scoring variable is then branched on. The default scoring function used by SCIP is the \texttt{{product}} {function}, given by
	\begin{equation}
	\textnormal{score}(l,r) = \max(\epsilon, l) \times \max(\epsilon, r),
	\end{equation}
	where $(l,r)$ are the dual bound changes from branching left (downward) and right (upward) respectively and $\epsilon = 10^{-6}$ is used to provide useful scores for variables with $\min(l,r) = 0$. A higher product score indicates a more favourable variable to branch on. Achterberg \cite{achterberg2007thesis} emphasizes the importance of using a good scoring function by demonstrating that the \texttt{product} function outperforms the previously standard \emph{weighted sum} scoring function by more than $10\%$ on experimental benchmarks.
	
	\subsection{Related work}
	
	A recent trend in the design of branching heuristics has been the application of machine learning to variable selection in MIP solvers. Khalil et al.\ \cite{khalil2016learning} devise a machine learning framework for branching and show that it can produce search tree sizes in line with commercial solvers, although the running time overhead is high. Alvarez et al.\ \cite{alvarez2017machine} similarly develop a machine learning approximation to strong branching, and demonstrate that it produces promising results when combined with the heuristics and separating cuts of CPLEX 12.2, although it performs less well when these are disabled. Finally, in \cite{balcan2018learning}, Balcan et al.\ devise a machine learning framework to learn a nearly-optimal linear combination of branching rules for a given distribution of instances. Although such learning-based approaches show great promise for solving problems from known distributions, they tend to lack in generality and applicability to arbitrary problem instances.
	
	\section{Le Bodic and Nemhauser's problems}\label{sec:open}
	
	In this section, we address open problems on Le Bodic and Nemhauser's abstract B\&B model \cite{LeBodic2017}. The  model consists of \emph{variables} with known, fixed integer gains that model the dual gap changes that occur when branching on a variable. A variable $x$ is thus represented by a pair of positive integers $(l,r)$ with $1 \leq l \leq r$. A \emph{branch and bound tree} (B\&B tree) is a vertex-weighted, full binary tree, where each internal node is associated with a variable $(l,r)$, such that a vertex with weight $g$ has children of weight $g + l$ and $g + r$. A B\&B tree is said to close a gap of $G$ if all of its leaves have weight at least $G$, where the root node has weight $g = 0$.
	
	Note that this model is not a perfect fit for what can be observed in the B\&B, where dual bound changes $l$ and $r$ are not fixed or known in advance, and can be 0.
	Despite these limitations, many insights and improvements can be established based on this model.

	\subsection{The Single Variable Branching problem}\label{sec:svb}
	
	The \textsc{Single Variable Branching} (SVB) problem models a B\&B tree consisting of a single variable $(l,r)$ branched on at every internal node. See Figure \ref{fig:svb2_5} for an example of an SVB tree corresponding to the variable $(2,5)$. The size of the smallest SVB tree that closes a gap of $G$ can be readily expressed by the $r^\textnormal{th}$-order linear recurrence
	\begin{equation}\label{eqn:svb_recurrence}
	t(G) = \begin{cases}
	1 & \myif G \leq 0,  \\
	1 + t(G-l) + t(G-r) & \myif G > 0. \\
	\end{cases}
	\end{equation}
	The asymptotic growth rate of the solution to the recurrence \eqref{eqn:svb_recurrence} is referred to as the \emph{ratio} $\varphi$, defined as
	\begin{equation}
	\varphi = \lim_{G \to \infty} \left(\frac{t(G+l)}{t(G)}\right)^\frac{1}{l},
	\end{equation}
	whose value can be shown to be the unique root greater than $1$ of the trinomial
	\begin{equation}\label{eqn:characteristic_polynomial}
	p(x) = x^r - x^{r-l} - 1.
	\end{equation}
	By definition, the ratio $\varphi$ engenders the useful approximation
	\begin{equation}\label{eqn:svb_approx}
	t(G) \approx t(\tilde{G})\varphi^{G - \tilde{G}}, \qquad \tilde{G} \leq G.
	\end{equation}
  
	Le Bodic and Nemhauser pondered the existence of a closed-form solution to $\varphi$, as this would be of great practical interest since $\varphi$ has been used as an ingredient in constructing effective variable selection rules for MIP. We resolve this question in the negative.
  
  \begin{figure}[H]
      \centering
      \begin{forest}
        for tree={
          draw,
          circle,
          minimum size=2em, 
          inner sep=1pt
        },
        [0,
        [2,edge label={node[midway,above left] {2}},
        [4,edge label={node[midway,  left] {2}},
        [6,edge label={node[midway,  left] {2}},
        ]
        [9,edge label={node[midway, right] {5}},
        ]
        ]
        [7,edge label={node[midway, right] {5}},
        ]
        ]
        [5,edge label={node[midway, above right] {5}},
        [7,edge label={node[midway, left] {2}},
        ]
        [10,edge label={node[midway, right] {5}},
        ]
        ]
        ]
      \end{forest}
      \caption{A minimal SVB tree closing the gap $G = 6$ using the variable $(2,5)$. The edge labels represent the dual bound change and the node labels indicate the total gap closed at a particular node.}\label{fig:svb2_5}
    \end{figure}
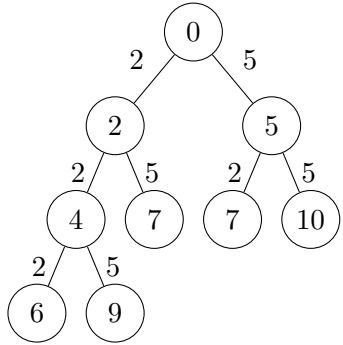
	
	\begin{theorem}\label{thm:no_closed_form}
		There is no closed-form formula for the positive root of the characteristic trinomial \eqref{eqn:characteristic_polynomial} in general.
	\end{theorem}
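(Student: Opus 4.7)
The plan is to interpret ``closed-form formula'' in the standard algebraic sense of being expressible by radicals over $\mathbb{Q}$, and then to exhibit a specific family of $(l,r)$ for which the trinomial $p(x) = x^r - x^{r-l} - 1$ has a non-solvable Galois group. By the Abel--Ruffini theorem, no such root can then be written in radicals, so there can be no uniform closed-form formula in terms of $l$ and $r$.

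The cleanest family to use is $l = r - 1$, which yields Selmer's trinomial $x^r - x - 1$. I would carry out the argument in three steps. First, I would argue that $p(x) = x^r - x - 1$ is irreducible over $\mathbb{Q}$ for all $r \geq 2$; this is a classical result due to Selmer, and a short self-contained proof can be given by reducing modulo suitable primes or by the standard factorization/content argument combined with the observation that any nontrivial factorization in $\mathbb{Z}[x]$ would force a root whose absolute value contradicts Rouch\'e-style estimates on the complex roots. Second, I would invoke Osada's theorem, which states that for $r \geq 2$ the Galois group of $x^r - x - 1$ over $\mathbb{Q}$ is the full symmetric group $S_r$. Third, since $S_r$ is not solvable for $r \geq 5$, the Abel--Ruffini theorem implies that the roots of $x^r - x - 1$ are not expressible in radicals, and in particular the unique real root $x \geq 1$ (which is the $\varphi$ guaranteed by Le Bodic and Nemhauser's analysis) has no radical expression.

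From this it follows that no single formula built from $l$ and $r$ using arithmetic operations and radicals can yield $\varphi$ for every admissible pair $(l,r)$, because such a formula would in particular produce a radical expression for the root of $x^r - x - 1$ for every $r \geq 5$, contradicting the above. I would conclude by noting that the same conclusion extends to broader notions of ``closed form'' (e.g.\ allowing only elementary functions), via the same Galois-theoretic obstruction and Ritt/Liouville-style arguments, although the radical form is the interpretation that is directly relevant for implementation.

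The main obstacle I anticipate is not the Galois-theoretic machinery itself, but locating and citing Osada's computation of the Galois group cleanly; the irreducibility of $x^r - x - 1$ is standard, but the identification of its Galois group as $S_r$ is the crucial input and has to be quoted carefully to cover all $r \geq 5$. If a self-contained treatment were preferred, one could instead verify non-solvability for a single small case (e.g.\ $r = 5$, $l = 4$) by explicitly computing the Galois group via resolvent cubics and discriminant checks, which already suffices to rule out a uniform closed form.
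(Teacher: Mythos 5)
Your proposal is correct and rests on the same Galois-theoretic engine as the paper: irreducibility of the trinomial, Osada's theorem identifying the Galois group as the full symmetric group, and Abel--Ruffini to rule out radical expressions. The difference is one of scope and of the irreducibility input. You specialize to the single family $l = r-1$, i.e.\ Selmer's trinomial $x^r - x - 1$, whose irreducibility is a classical result of Selmer; a single non-solvable instance (even just $r=5$, $l=4$) indeed suffices to refute the existence of a uniform closed form, so your argument is a legitimate and more economical proof of the theorem as stated. The paper instead analyzes the full two-parameter family $x^r - x^{r-l} - 1$: it first reduces to $\gcd(r,l)=1$ via the substitution $x \mapsto x^{1/d}$, then invokes Ljunggren's theorem to classify exactly when the trinomial is irreducible (it fails only when $r,l$ are both odd and $r+l \equiv 0 \pmod 3$, in which case a factor $x^2 - x + 1$ splits off), and then applies Osada to the irreducible cases. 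What the paper's longer route buys is a near-complete characterization of which pairs $(l,r)$ \emph{are} solvable by radicals (namely $r/\gcd(r,l) \leq 4$, plus small reducible cases), which is of independent practical interest for deciding when $\varphi$ can be computed exactly; your route buys brevity at the cost of that classification. One caution: your suggested self-contained irreducibility argument for $x^r - x - 1$ (``content argument plus Rouch\'e-style estimates'') is sketchier than you acknowledge --- Selmer's actual proof requires a non-obvious argument about the distribution of roots relative to the unit circle --- so you should cite Selmer rather than claim the proof is routine. Your closing remark that the obstruction extends to elementary functions via Ritt/Liouville-type arguments is plausible but unsubstantiated as written; the radical interpretation is the one both you and the paper actually establish.
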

	
  \noindent We prove this theorem and provide an analysis of the algebraic characteristics of the trinomial $f(x) = x^r - x^{r-l} - 1$. When $r=l$, the polynomial is $f(x)=x^{r}-2$, which has the unique real root $x = 2^{\frac{1}{r}}$.  When $l=0$, the polynomial is $f(x)=-1$ which has no roots. Without loss of generality, we subsequently consider the case where $0 < l < r$.
    
    
    \subsubsection{Solvability by radicals}
    
    We wish to answer the question: for which values of $r$ and $l$, is $f(x)$ solvable by radicals, that is by a finite number of field operations and the taking of $n$-th roots. Let $r=k_{1}d$ and $l=k_{2}d$ where $d=\gcd(r,l)$ and $k_{1}, k_{2}\in \mathbb{N}$.  We first note the following fact.
    \begin{lemma}
      If $d>1$, then $f(x)=x^{k_{1}d}-x^{(k_{1}-k_{2})d}-1$ is solvable by radicals if and only if $F(X)=X^{k_{1}}-X^{k_{1}-k_{2}}-1$ is solvable by radicals.
    \end{lemma}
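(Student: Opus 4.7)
The plan is to exploit the substitution $X = x^d$, under which $F(x^d) = (x^d)^{k_1} - (x^d)^{k_1 - k_2} - 1 = x^{k_1 d} - x^{(k_1 - k_2)d} - 1 = f(x)$. This identifies the roots of $f$ as precisely the $d$-th roots of the roots of $F$, and conversely the roots of $F$ as the $d$-th powers of the roots of $f$. I would set this identity up first, and then argue both implications by tracking roots through a tower of radical extensions.

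For the forward direction, assume $F$ is solvable by radicals. Then each root $\beta$ of $F$ lies in some radical extension $K/\mathbb{Q}$. Any root $\alpha$ of $f$ satisfies $\alpha^d = \beta$ for some root $\beta$ of $F$, so $\alpha$ lies in $K(\beta^{1/d})$, which is again a radical extension. Hence every root of $f$ is expressible by radicals. For the reverse direction, assume $f$ is solvable by radicals, so each of its roots $\alpha_i$ lies in a radical extension $L/\mathbb{Q}$. Every root of $F$ is of the form $\alpha_i^d$, obtained by a single field operation on an element of $L$, so it also lies in $L$. Hence $F$ is solvable by radicals.

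I would then note that this argument can equivalently be phrased via Galois theory: the splitting field of $F$ is contained in the splitting field of $f$ (since roots of $F$ are polynomial expressions in roots of $f$), and the splitting field of $f$ is obtained from that of $F$ by adjoining $d$-th roots together with a primitive $d$-th root of unity. Both operations preserve solvability of the Galois group, giving another route to the same conclusion.

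The only real subtlety—and the step I would be most careful about—is the handling of roots of unity and the completeness of the root correspondence. Specifically, when I write ``the roots of $f$ are the $d$-th roots of roots of $F$,'' I need the multiplicity/counting to match: $f$ has degree $k_1 d$ and $F$ has degree $k_1$, and each root of $F$ contributes exactly $d$ roots of $f$ (its $d$-th roots, obtained by multiplying one such root by $d$-th roots of unity). Since $d$-th roots of unity are themselves expressible by radicals, no obstruction arises, but this is the point at which one must invoke the elementary fact that cyclotomic extensions are solvable so that the argument carries through cleanly.
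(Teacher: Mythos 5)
Your proof is correct and rests on the same observation the paper uses: under the substitution $X = x^d$, the roots of $f$ are exactly the $d$-th roots of the roots of $F$, so solvability transfers in both directions (the paper states this correspondence in one line and stops there). Your additional care about adjoining $d$-th roots of unity to make the root correspondence complete is a sound elaboration of the same argument rather than a different route.
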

    \begin{proof}
      We note that $\alpha$ is a root of $F(X)$ if and only if $\alpha^\frac{1}{d}$ is a root of $f(x)$.  
    \end{proof}
    
    \noindent For the remainder of the proof, it suffices to  consider only cases where $\gcd(r, l)=1$ and $0<l<r$.  
    We use the following theorem in \cite{ljunggren1960irreducibility} which states:
    \begin{lemma}[Ljunggren, 1960]\label{thm1}
      If $n=n_{1}d$, $m=m_{1}d$, $\gcd(n_{1}, m_{1})=1$, $n\geq 2m$, then the polynomial 
      \begin{align*}
      g(x)=x^{n}+\epsilon x^{m}+\epsilon ' \text{, }&\epsilon = \pm 1\text{, } \epsilon ' = \pm 1,
      \end{align*}
      is irreducible over $\mathbb{Q}$, except if $n_{1}+m_{1}\cong 0 \pmod 3$ and one of the following cases is true:
      \begin{enumerate}[leftmargin=18pt]
        \item $n_{1}$, $m_{1}$ both odd, $\epsilon =1$;
        \item$n_{1}$ even, $\epsilon ' =1$;
        \item $m_{1}$ even, $\epsilon ' = \epsilon$.
      \end{enumerate}
      If the polynomial $g(x)$ is reducible over $\mathbb{Q}$, then $g(x) = (x^{2d}+(\epsilon)^{m} (\epsilon ')^{n}x^{d}+1) g'(x)$, where $g'(x)$ is irreducible over $\mathbb{Q}$.
    \end{lemma}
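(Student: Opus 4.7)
Since the statement is imported directly from Ljunggren's 1960 paper \cite{ljunggren1960irreducibility}, a proof would reconstruct Ljunggren's original argument, and any proposal is essentially a sketch of his classical strategy. The plan is to reduce to factorizations over $\mathbb{Z}[x]$ via Gauss's lemma and then extract structural constraints on any putative factor by studying the complex roots of $g(x)=x^n+\epsilon x^m+\epsilon'$. Each root $\alpha$ satisfies $\alpha^n = -\epsilon\alpha^m-\epsilon'$, so by bounding $|\alpha|$ (the hypothesis $n\geq 2m$ is what makes such bounds tight) and applying elementary symmetric function identities, one obtains restrictions on which subsets of roots can be the root set of a proper factor with integer coefficients.

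The next step is to produce the distinguished factor $h(x)=x^{2d}+(\epsilon)^{m}(\epsilon')^{n}x^d+1$ explicitly and determine exactly when $h\mid g$. Writing $y=x^d$, $h$ becomes a quadratic in $y$ whose roots are primitive third or sixth roots of unity (these are precisely $\Phi_3$ and $\Phi_6$ after adjusting for the sign $\epsilon^m\epsilon'^n$). Substituting such a root $\zeta$ into $g(x)$ and using $n=n_1d$, $m=m_1d$ reduces $g(\zeta)=0$ to a condition purely on $n_1,m_1$ modulo $6$; tracking the parities of $n_1,m_1$ and the signs $\epsilon,\epsilon'$ one recovers both the congruence $n_1+m_1\equiv 0 \pmod 3$ and the three exceptional parity/sign patterns listed in the statement. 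This direction (showing these cases are indeed reducible) is purely computational.

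The main obstacle is the other direction: showing that outside the three exceptional cases $g(x)$ is irreducible, and that in the exceptional cases $g(x)/h(x)$ is irreducible. The coprimality hypothesis $\gcd(n_1,m_1)=1$ is used here to rule out any factorization of the form $g(x)=G(x^e)\cdot R(x)$ with $e>1$ via Capelli-type results on splitting fields of trinomials, while the hypothesis $n\geq 2m$ is used both to make the Newton polygon of $g$ have a unique relevant slope at the primes dividing the discriminant and to force any non-cyclotomic factor to have degree inconsistent with the root-modulus bounds obtained in the first step. Combining the Newton polygon analysis, the explicit root bounds, and a finite case check on the residues of $(n_1,m_1)$ modulo $6$ completes the argument. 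Realistically, rather than reproducing this in the present paper, the intention is simply to invoke Ljunggren's theorem as a black box.
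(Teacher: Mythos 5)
The paper offers no proof of this lemma at all---it is imported verbatim from Ljunggren's 1960 paper and used as a black box, which is exactly what you conclude is the right move. Your sketch of Ljunggren's argument is a reasonable gloss (though the details, e.g.\ the Newton-polygon step, are not how his actual proof proceeds---he analyses the non-reciprocal part of the trinomial via coefficient comparison in $x^n g(x)g(1/x)$), but since the operative content in both cases is simply the citation, your approach matches the paper's.
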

    
    \noindent We will also use the following.
    \begin{lemma}\label{lem1}
      $2r-l \cong 0 \pmod 3$ if and only if $r+l \cong 0 \pmod 3$.
    \end{lemma}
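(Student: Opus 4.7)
The plan is to give a direct modular arithmetic argument, working entirely in $\mathbb{Z}/3\mathbb{Z}$. The key observation is that $2 \equiv -1 \pmod{3}$, so the expression $2r - l$ and the expression $r + l$ are in fact negatives of each other modulo $3$, making the implication not only true but an equivalence.

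More precisely, I would first write $2r - l \equiv -r - l \equiv -(r+l) \pmod 3$, using only the single fact $2 \equiv -1 \pmod{3}$. From this congruence, $3 \mid 2r - l$ holds if and only if $3 \mid -(r+l)$, which in turn is equivalent to $3 \mid r+l$. The desired implication then follows immediately in one direction (and, as a free bonus for later use, the converse also holds, so the two divisibility conditions in the statement can be used interchangeably throughout the surrounding argument).

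There is no genuine obstacle: the proof is a one-line reduction and does not depend on any of the hypotheses about $\gcd(r,l)=1$ or $0 < l < r$ that are in force for the surrounding discussion. The only subtlety worth flagging in the write-up is that the equivalence is strict, so whenever the Ljunggren trichotomy is invoked with the side condition $n_1 + m_1 \equiv 0 \pmod 3$ (here with $n_1 = r$, $m_1 = l$), one can replace that hypothesis by $2r - l \equiv 0 \pmod 3$ without loss of generality — which is presumably the reason this lemma is stated at all.
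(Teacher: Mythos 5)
Your proof is correct. The paper proves this lemma by enumerating the residue pairs $(r \bmod 3,\ l \bmod 3)$ for which $2r - l \equiv 0 \pmod 3$ — namely $(0,0)$, $(1,2)$, $(2,1)$ — and observing that these are exactly the pairs with $r + l \equiv 0 \pmod 3$. Your argument reaches the same conclusion by the single identity $2r - l \equiv -(r+l) \pmod 3$, which is a cleaner route: it replaces the three-case check with one line and makes it transparent \emph{why} the two conditions coincide (they are negatives of each other mod $3$). Both arguments in fact establish the full equivalence, not just the stated implication, and neither uses the ambient hypotheses $\gcd(r,l)=1$ or $0<l<r$; your remark about how the equivalence is what lets the Ljunggren condition $n_1+m_1\equiv 0 \pmod 3$ be traded for $2r-l\equiv 0 \pmod 3$ matches exactly how the lemma is used in Case 1 of the subsequent irreducibility proof.
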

    \begin{proof}
      It is clear  that $2r-l \cong 0 \pmod 3$ if and only if  one of the following conditions holds:
      \begin{align*}
      r \cong 0 \pmod 3 \text{  and  } & l \cong 0 \pmod 3,   \\
      r \cong 1 \pmod 3 \text{  and  } & l \cong 2 \pmod 3, \text{ or} \\
      r \cong 2 \pmod 3 \text{  and  } & l \cong 1 \pmod 3. 
      \end{align*}
      These are precisely the cases where $r+l \cong 0 \pmod 3$.
    \end{proof}
    \begin{lemma}\label{lem:irreducability}
      If $r$ and $l$ are co-prime and $0<l < r$, then $f(x)=x^{r}-x^{r-l}-1$ is irreducible over $\mathbb{Q}$ apart 
      from the case where both $r$ and $l$ are odd and $r+l \cong 0 \pmod 3$. 
      In the latter case $f(x)=g(x)(x^{2}-x+1)$ where $g(x)$ is irreducible over $\mathbb{Q}$.  
    \end{lemma}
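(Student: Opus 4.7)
The plan is to apply Ljunggren's theorem (Lemma \ref{thm1}) to $f$, but its hypothesis $n \geq 2m$ translates to $r \geq 2(r-l)$, i.e., $r \leq 2l$, which need not hold. I would therefore split into two regimes. When $r \leq 2l$ I apply Ljunggren directly to $f$ with $n = r$, $m = r-l$, $\epsilon = \epsilon' = -1$. When $r \geq 2l$ I pass to the sign-adjusted reciprocal polynomial $\tilde{f}(x) := -x^r f(1/x) = x^r + x^l - 1$, which is irreducible if and only if $f$ is (since $f(0) = -1 \neq 0$), and apply Ljunggren with $n = r$, $m = l$, $\epsilon = 1$, $\epsilon' = -1$. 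In both regimes $\gcd(n,m) = \gcd(r,l) = 1$, so Ljunggren's $d$ equals $1$ and $(n_1, m_1) = (n, m)$.

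Next I would eliminate Ljunggren's exceptional sub-cases that are incompatible with the sign pattern. For $f$ only sub-case (iii) survives, demanding $m_1 = r-l$ even and thus, by co-primality of $r, l$, both $r$ and $l$ odd. For $\tilde{f}$ only sub-case (i) survives, which directly demands $r, l$ both odd. The accompanying congruence $n_1 + m_1 \equiv 0 \pmod 3$ reads $2r - l \equiv 0 \pmod 3$ in the first regime and $r + l \equiv 0 \pmod 3$ in the second, and Lemma \ref{lem1} establishes that these two conditions are equivalent. Hence both regimes single out exactly the stated exceptional case: $r, l$ both odd with $r + l \equiv 0 \pmod 3$.

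To pin down the quadratic factor, I would evaluate Ljunggren's formula $x^{2d} + \epsilon^{m}(\epsilon')^{n} x^{d} + 1$: with $d = 1$ both regimes give $x^2 - x + 1$, using $(-1)^{2r-l} = -1$ for $f$ (as $l$ is odd) and $(-1)^r = -1$ for $\tilde{f}$ (as $r$ is odd). Because $x^2 - x + 1$ is self-reciprocal, the factorization of $\tilde{f}$ transports back to $f$ under $x \mapsto 1/x$, with the complementary factor being the reciprocal of Ljunggren's other irreducible factor and hence itself irreducible of degree $r-2$.

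The main obstacle is the bookkeeping around the reciprocal trick and the three sign-dependent exceptions in Ljunggren's statement; once the correct $(\epsilon, \epsilon')$ is chosen in each regime, the collapse to a single exceptional case follows by routine modular arithmetic via Lemma \ref{lem1}. A small sanity check is that the two regimes overlap only at $r = 2l$, which under $\gcd(r,l) = 1$ forces $(r,l) = (2,1)$; there $f(x) = x^2 - x - 1$ is irreducible and lies outside the exceptional parity, consistent with both applications of Ljunggren.
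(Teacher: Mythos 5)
Your proposal is correct and follows essentially the same route as the paper's proof: split at $l = r/2$, apply Ljunggren's theorem (Lemma~\ref{thm1}) directly to $f$ in the regime $r \leq 2l$ and to the reciprocal polynomial $x^{r}+x^{l}-1$ in the regime $r \geq 2l$, eliminate the sign-incompatible exceptional sub-cases, and use Lemma~\ref{lem1} to merge the two congruences into $r+l \equiv 0 \pmod 3$. Your evaluation of the quadratic factor as $x^{2}-x+1$ in both regimes is also the correct one (the paper's Case~1 writes $x^{2}-x-1$, which is a typo, since Ljunggren's factor always has constant term $+1$).
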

    \begin{proof}
      Assume $\gcd(r,l)=1$.  We deal with two cases, the case where $r/2 \leq l$, and where $l < r/2$. From now on, when we say that a polynomial is reducible/irreducible, we mean that it is reducible/irreducible over $\mathbb{Q}$.
      
      \begin{itemize}[leftmargin=*]
        \item \textbf{Case 1 ($ r/2 \leq l$):} The proof follows from Lemma \ref{thm1}.    As the coefficients of the non-leading terms of $f(x)$ are $-1$, the
        only case where $f(x)$ can be reducible is the third case, in which it follows that $2r-l \cong 0 \mod 3$ and  $r-l$ is even.  
        By Lemma \ref{lem1}, if $2r-l \cong 0 \pmod 3$, then $r+l \cong 0 \pmod 3$.   As $\gcd(r,l)=1$ and $r-l$ is even,  it follows that $r$ and $l$ must be odd.  
        Thus $f(x)$ is irreducible except when $r$ and $l$ are odd, and  $r+l \cong 0 \pmod 3$. If $f(x)$ is reducible, then $f(x)=(x^{2}-x-1)g(x)$ where $g(x)$ is an irreducible polynomial.
        
        \item \textbf{Case 2 ($l < r/2$):} Consider the polynomial $h(x)=-x^{r}f(1/x)=x^{r}+x^{l}-1$. We will use the fact that $f$ is reducible over $\mathbb{Q}$ if and only if $h$ is reducible over $\mathbb{Q}$. To see this, note that if $h$ is reducible over $\mathbb{Q}$, then there exists a polynomial $h' \in \mathbb{Q}[x]$ of degree $k<r$ that divides $h$. Then observe that $x^k h'(1/x)$ is a polynomial in $\mathbb{Q}[x]$ that divides $f$, so $f$ must be reducible over $\mathbb{Q}$. Similarly, if $f$ is reducible over $\mathbb{Q}$ with divisor $f' \in \mathbb{Q}[x]$ of degree $k'$, then the polynomial $x^{k'} f'(1/x)$ is a divisor of $h$, so $h$ is also reducible over $\mathbb{Q}$.
        By Lemma \ref{thm1}, and noting that $\epsilon=1$ and $\epsilon ' = -1$ in $h(x)$, the polynomial $h(x)$ 
        (and thus the polynomial $f(x)$) is irreducible when $0<l\leq r/2$ apart from the case where both $r$ 
        and $l$ are  odd and $r+l \cong 0\pmod 3$.  If $h(x)$ is reducible, then $h(x)=(x^{2}-x+1)g'(x)$ where $g'(x)$ is an irreducible polynomial.  
        As $f(x)=-x^{r}h(1/x)$,  it follows that if $f(x)$ is reducible, then 
        \begin{align*}
        f(x)&=-x^{r}  \left( \frac{1}{x^{2}} -\frac{1}{x} +1\  \right) g' \left ( \frac{1}{x} \right)\\
        &= -(x^{r}-x^{r-1}+x^{r-2}) g' \left ( \frac{1}{x} \right)\\
        &= -(x^2 - x + 1)x^{r-2} g'\left(\frac{1}{x}\right)\\
        &=(x^2-x+1)g(x)
        \end{align*}
        where $g(x)$ is the irreducible polynomial $-x^{r-2}g'(\frac{1}{x})$.
      \end{itemize}
     
    \end{proof}
    

    \subsubsection{Galois group of $f(x)$} We now show that when $f(x)$ is irreducible over $\mathbb{Q}$, it is not solvable by radicals for $r\geq 5$.  We will use the following theorem in \cite{osada1987galois}.
    \begin{lemma}[Osada, 1987]\label{thm2}
      Let $f(X) = X^{n} + aX^{m} + b$ be a polynomial of integer
      coefficients, that is, $f(X) \in \mathbb{Z}[X]$. Let $a = a_{0}c^{n}$ and $b = b_{0}^{m}c^{n}$, with $a_0, b_0, c \in \mathbb{Z}$. Then the Galois
      group of $f(X)$ is isomorphic to the symmetric group $S_{n}$ of degree n if the following
      conditions are satisfied:
      \begin{itemize}[leftmargin=12pt]
        \item $f(X)$ is irreducible over $\mathbb{Q}[X]$,
        \item $\gcd(a_{0}c(n-m)m, nb_{0})=1$
      \end{itemize}
    \end{lemma}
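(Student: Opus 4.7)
The plan is to prove that $G := \mathrm{Gal}(f/\mathbb{Q}) \cong S_n$ by the classical three-step recipe: (1) irreducibility of $f$ automatically embeds $G$ as a transitive subgroup of $S_n$; (2) use the coprimality hypothesis to locate a rational prime $p$ at which the reduction of $f$ has exactly one double root and otherwise simple roots, so that the inertia subgroup at a place above $p$ acts on the roots as a single transposition and hence $G$ contains a transposition; (3) show $G$ is primitive, at which point Jordan's classical theorem (a primitive subgroup of $S_n$ containing a transposition must be all of $S_n$) finishes the argument.

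For step (2), I would first write down the discriminant of $f(X) = X^n + aX^l + b$. Using $f'(X) = X^{l-1}\bigl(nX^{n-l} + al\bigr)$ and the resultant formula, a direct computation yields
\begin{equation}
\mathrm{disc}(f) \;=\; (-1)^{n(n-1)/2}\, b^{\,l-1}\Bigl(n^{n/d} b^{(n-l)/d} \;-\;(-1)^{n/d}(n-l)^{(n-l)/d}\, l^{\,l/d}\, a^{n/d}\Bigr)^{d},
\end{equation}
where $d = \gcd(n,l)$. The substitutions $a = a_{0}c^{n}$, $b = b_{0}^{l}c^{n}$ are engineered precisely so that, after extracting the obvious common factor, the remaining ``essential'' integer factor $D$ has its prime divisors controlled by the coprimality hypothesis $\gcd(a_0 c(n-l)l,\, n b_0)=1$. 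I would then pick a prime $p \mid D$ of odd multiplicity (the edge case $D = \pm 1$ is handled separately), and verify by explicit reduction that $\bar f \in \mathbb{F}_p[X]$ has a unique double root coming from the common root of $\bar f$ and $\bar f'$ — the coprimality hypothesis is exactly what rules out $p$ dividing $n$, $l$, $n-l$, $a_0$, $b_0$, or $c$, each of which would spoil this picture. Standard ramification theory (inertia at a tamely ramified prime with $v_p(\mathrm{disc}) = 1$ is generated by a transposition) then delivers the transposition in $G$.

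For step (3), I would argue primitivity by contradiction. If $G$ preserves a nontrivial block system of blocks of size $k$ with $k \mid n$ and $1 < k < n$, then $f$ factors over an intermediate field as a product of $n/k$ degree-$k$ polynomials; the transposition found in step (2) must swap two roots lying in a common block. Combining this with the Newton-polygon data of $f$ at the ramified prime $p$ — whose slopes are dictated by $v_p(a)$, $v_p(b)$, and the exponents $n,l$ — one can rule out any compatible proper block decomposition, because the coprimality hypothesis forces the Newton polygon at $p$ to consist of a single segment whose slope is coprime to any would-be block size.

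The main obstacle I anticipate is the bookkeeping in step (2): the single hypothesis $\gcd(a_{0}c(n-l)l,\, nb_{0})=1$ must simultaneously (a) force existence of a prime $p$ with odd $v_p(D)$, (b) guarantee that $\bar f \bmod p$ has exactly one double root rather than any higher-order coincidence, and (c) supply the slope data used in step (3). The delicate part is the case split according to which of the integers $a_0, b_0, c, n, l, n-l$ the prime $p$ might a priori divide, and showing that the coprimality condition excludes every bad case. This is essentially the technical core of Osada's original argument in \cite{osada1987galois}, and I would follow his case analysis while cross-checking each step via the discriminant formula above.
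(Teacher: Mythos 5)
The paper offers no proof of this lemma: it is imported verbatim as a known result of Osada \cite{osada1987galois} and used as a black box, so there is no internal argument to compare yours against. Judged on its own terms, your sketch has two genuine gaps. First, in step (2) you propose to ``pick a prime $p \mid D$ of odd multiplicity,'' but nothing in the hypotheses guarantees that such a prime exists; the essential factor $D$ could a priori be a perfect square (note also that $D$ enters the discriminant raised to the power $d=\gcd(n,l)$, which may be even). The logic in the standard proofs runs in the opposite direction: one analyses $\bar f \bmod p$ and the local index $[\mathcal{O}_K:\mathbb{Z}[\theta]]$ for \emph{every} prime dividing the discriminant, concludes that every nontrivial inertia subgroup is generated by transpositions, and only then deduces that the discriminant is not a square. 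You cannot start from a prime of odd discriminant valuation, because the existence of such a prime is part of what must be proved.

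Second, your primitivity argument in step (3) does not work. On your own account the relevant prime $p$ divides neither $a$ nor $b$, so the Newton polygon of $f$ at $p$ is a single horizontal segment of slope $0$; it carries no information about block sizes and cannot obstruct a nontrivial block system. Newton-polygon obstructions to imprimitivity come from primes dividing $b$ (where one hopes for a totally ramified segment whose slope has denominator $n$), not from the discriminant primes you are working with. The clean way to finish --- and essentially Osada's route, following Uchida --- is to avoid primitivity entirely: show that every inertia subgroup of the Galois closure is generated by transpositions, invoke Minkowski's theorem that $\mathbb{Q}$ admits no everywhere-unramified extension (so these inertia subgroups generate the whole Galois group), and conclude with the elementary fact that a transitive subgroup of $S_n$ generated by transpositions is $S_n$. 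Your step (1) and the discriminant formula are fine, and the ``bookkeeping'' you flag in step (2) is indeed the technical core, but as written the argument would not close.
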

    
    \begin{lemma}
      If $f(x)=x^{r}-x^{r-l}-1$ is an irreducible polynomial in $\mathbb{Q}[x]$ and $\gcd(r,l)=1$, 
      then $f(x)$ has Galois group $S_{r}$.
    \end{lemma}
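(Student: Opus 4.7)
The plan is to apply Osada's theorem (Lemma \ref{thm2}) directly, with the main work being to set up the correct identification of parameters. The principal nuisance is notational: Osada's theorem uses the letter $l$ for the middle exponent of the polynomial, which conflicts with the $l$ already appearing in $f(x) = x^r - x^{r-l} - 1$. To keep things unambiguous, I would introduce $m := r - l$ as the ``Osada-$l$'', so that $f(x)$ matches the template $X^n + aX^m + b$ with $n = r$, $a = -1$, and $b = -1$.

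Next I would choose the decomposition required by Osada's theorem. Taking $c = 1$ in the relations $a = a_0 c^n$ and $b = b_0^{m} c^n$ gives $a_0 = -1$ and $b_0 = -1$. The irreducibility hypothesis is assumed in the statement, so the only nontrivial thing left to check is the coprimality condition $\gcd(a_0 c (n - m) m,\, n b_0) = 1$. Substituting the chosen values yields $n - m = r - (r-l) = l$, so the condition collapses to $\gcd(l(r-l),\, r) = 1$.

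This last gcd is immediate from the hypothesis $\gcd(r, l) = 1$: the Euclidean identity gives $\gcd(r, r-l) = \gcd(r, l) = 1$, and hence both factors $l$ and $r-l$ are coprime to $r$, making their product coprime to $r$ as well. Osada's theorem then concludes that the Galois group of $f(x)$ is $S_r$, as required.

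The main obstacle, honestly, is just the bookkeeping around Osada's notation and verifying that the trivial decomposition $c = 1$ really suffices for our polynomial; there is no deeper algebraic content to wrestle with, since the gcd condition reduces in one line to the coprimality we already have.
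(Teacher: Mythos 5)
Your reduction of the gcd condition is fine, but there is a genuine gap in the parameter choice for Osada's theorem. The relation $b = b_0^{m}c^{n}$ is a \emph{power} relation in $b_0$, so with $c=1$ you cannot simply read off $b_0 = b = -1$: you need an integer $b_0$ with $b_0^{\,r-l} = -1$, which exists only when $r-l$ is odd. Under the hypotheses of the lemma, $r-l$ can perfectly well be even --- take $r=5$, $l=3$, where $f(x)=x^5-x^2-1$ is irreducible and $\gcd(r,l)=1$, but $b_0^{2}=-1$ has no integer solution. So the ``trivial decomposition'' you flag as the only thing to verify in fact fails on a nonempty family of instances, and your proof does not cover them.

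The repair is a case split, which is exactly what the paper does. When $r-l$ is odd, your choice $c=1$, $a_0=b_0=-1$ is valid. When $r-l$ is even, coprimality forces both $r$ and $l$ to be odd, so $r$ is odd and one may take $c=-1$, $a_0=b_0=1$: then $a_0c^{r}=-1=a$ and $b_0^{\,r-l}c^{r}=-1=b$, and the coprimality condition becomes $\gcd\left(-l(r-l),\, r\right)=1$, which follows from $\gcd(r,l)=1$ just as in your argument. (The paper organises the two cases by the parity of $l$ rather than of $r-l$, but the essential point is the same: the sign of $c$ must be chosen to absorb the $-1$ in $b$ when the middle exponent is even.) With that case analysis added, your argument matches the paper's.
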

    \begin{proof}
      We consider two cases based on the parity of $r$.
      
      \begin{itemize}[leftmargin=*]
        \item \textbf{Case 1 ($r$ is even):} Since $\gcd(r,l) = 1$, we know that $l$ is odd. We apply Lemma~\ref{thm2}, with $a_{0}=b_{0}=-1$ and $c=1$. As $f(x)$ is irreducible, by Lemma \ref{thm2}, $f(x)$ has Galois group $S_{r}$ if 
        \begin{align*}
        \gcd(-1\times(r-r+l)(r-l), -r) = \gcd(l(l-r),-r) = \gcd(l^{2}, r)
        \end{align*}
        equals one, which must be true since $\gcd(l,r) = 1$.
        
        \item \textbf{Case 2 ($r$ is odd):} We apply Lemma~\ref{thm2}, with $a_{0}=b_{0}=1$ and $c=-1$. As $f(x)$ is irreducible, by Lemma \ref{thm2}, $f(x)$ has Galois group $S_{r}$ if 
        \begin{equation}
        \gcd(-1\times(r-r+l)(r-l), r) = \gcd(l(l-r),r) = \gcd(l^{2}, r)
        \end{equation}
        equals one, which must be true since $\gcd(l,r) = 1$.
      \end{itemize}
      
    \end{proof}

    \subsubsection{Consequences for the characteristic trinomial}  The Galois group $S_{n\geq 5}$ is not solvable by radicals.  
    This means that if $f(x)$ is irreducible over $\mathbb{Q}$:
    \begin{itemize}[leftmargin=*]
      \item If $\gcd(r, l)=1$ and $r\geq 5$ then the polynomial is \textbf{not} solvable by radicals,
      \item If $\gcd(r, l)=d$ and $r/d \geq 5$ then the polynomial is \textbf{not} solvable by radicals.
    \end{itemize}
    
    \noindent These points allow us to conclude Theorem~\ref{thm:no_closed_form}. Since any polynomial that factorizes into factors of degree at most four is solvable by radicals, we have, however,
    \begin{itemize}[leftmargin=*]
      \item If $r/\gcd(r,l) \leq 4$ then the polynomial \textbf{is  solvable} by radicals.
    \end{itemize}
    
    \noindent The remaining cases are where $f(x)$ is reducible, that is, according to Lemma~\ref{lem:irreducability}, where both $r$ and $l$ are odd and $r+l \cong 0 \pmod 3$. The factor $x^{2}-x+1$ is not solvable by radicals. The irreducible factor $g(x)$ has degree 
    $r-2$.  If $r <7$ then it is solvable by radicals.  Cases where $f(x)$ is reducible and $r\geq 7$ remain open. The smallest such case is $r=7$ and $l=5$. 
    Here $f(x)=(x^2-x+1)(x^5 + x^4 - x^2 - x - 1)$ where the quintic has Galois group $S_{5}$ and so is not solvable by radicals.

	\subsection{The Multiple Variable Branching problem}\label{sec:mvb}
	
	The \textsc{Multiple Variable Branching} (MVB) problem models a B\&B tree consisting of a set of variables $(l_i, r_i)_{1\leq i \leq n}$ each of which may be branched on an arbitrary number of times. The size of the smallest MVB tree that closes a gap of $G$ can be expressed as the solution to the following non-linear recurrence.
	\begin{equation}
	t(G) = \begin{cases}
	1 & \myif G \leq 0, \\
	1 + \min\limits_{1 \leq i \leq n} (t(G-l_i) + t(G-r_i))  & \myif G > 0. \\
	\end{cases}
	\end{equation}
	The ratio of an MVB tree is defined similarly to that of an SVB tree, such that
	\begin{equation}\label{eqn:mvb_ratio_def}
	\varphi = \lim_{G \to \infty} \left( \frac{t(G+z)}{t(G)} \right)^\frac{1}{z},
	\end{equation}
	where $z = \lcm_{1\leq i \leq n}(l_i, r_i)$. Le Bodic and Nemhauser showed that the ratio for an MVB tree is related to the ratios of the constituent variables such that
	\begin{equation}\label{eqn:mvb_svb_ratio}
	\varphi = \min_{1 \leq i \leq n} \varphi_i,
	\end{equation}
	where $\varphi_i$ is the ratio of the single variable $(l_i, r_i)$.	This interesting result spurred the following specious conjecture.
	
	\begin{conjecture}[Le Bodic and Nemhauser, 2017 \cite{LeBodic2017}]
		For each instance of MVB, there exists a gap $H$ such that for all gaps greater than $H$, variable $i = \argmin_j \varphi_j$ is always optimal to branch on at the root node.
	\end{conjecture}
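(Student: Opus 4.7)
The plan is to disprove the conjecture by exhibiting a small MVB instance on which some non-$i^*$ variable is strictly optimal to branch on at the root for arbitrarily large $G$. I start from the heuristic that, if $t(G)\sim c\varphi^G$ with a constant $c$, then $t(G-l_i)+t(G-r_i) \sim c\varphi^G(\varphi^{-l_i}+\varphi^{-r_i})$, which is minimised precisely by the variables achieving $\varphi_i = \varphi$; under this assumption the conjecture would hold. The conjecture must therefore fail exactly where this assumption does: when $t(G)/\varphi^G$ is not asymptotically constant but genuinely oscillates. Linearity of the SVB recurrence \eqref{eqn:svb_recurrence} rules out such oscillation in the single-variable case, but in MVB the $\min$ destroys linearity and can trap $t(G)/\varphi^G$ in a nontrivial cycle modulo $\lcm_i(l_i,r_i)$, giving residue classes of $G$ on which a variable with strictly larger $\varphi_i$ still wins the root comparison.

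Guided by this, I would search for a counterexample by pairing a symmetric variable $(k,k)$, whose SVB dynamics is strictly periodic in $G \bmod k$, with an asymmetric variable $(l,r)$ of slightly larger $\varphi$, so that $i^*$ is the symmetric variable. A concrete candidate is $(l_1,r_1) = (3,5)$ paired with $(l_2,r_2) = (4,4)$, for which $\varphi_2 = 2^{1/4} < \varphi_1$, so $i^* = 2$. Tabulating $t(G)$ by dynamic programming then reveals a residue class of $G$ modulo $\lcm(3,4,5) = 60$ on which the root cost $t(G-3)+t(G-5)$ of variable $1$ is strictly less than $2t(G-4)$; for example the class $G \equiv 5 \pmod 8$ already shows this, with the gap between the two root costs scaling like $\varphi^G$ rather than decaying.

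The main obstacle is to promote this numerical observation to a rigorous claim for all sufficiently large $G$ in the chosen residue class. My plan is to conjecture the eventually-periodic optimal policy suggested by the DP, unroll the resulting recurrence for $t(G)$ over one full period of length $\lcm(l_1,r_1,l_2,r_2)$, and verify by strong induction that the guessed policy is self-consistent: once the DP agrees with the policy on one window of length equal to the period, the $\min$ in the next window is resolved the same way and the induction closes. This self-consistency check reduces to a finite number of numerical inequalities on the tabulated values of $t$, together with the observation that any subdominant correction to $c\varphi^G$ decays strictly faster than $\varphi^G$ and so cannot reverse the asymptotic gap. Once closed, the strict advantage of variable $1$ at the root on the identified infinite subsequence of gaps refutes the conjecture.
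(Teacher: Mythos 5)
Your approach is essentially the paper's: disprove the conjecture with a two-variable instance that pairs a symmetric variable of minimal ratio against an asymmetric one of slightly larger ratio, establish an eventually-periodic closed form for $t(G)$ by induction over one period, and exhibit a residue class of $G$ on which the larger-ratio variable is strictly better at the root (the paper uses $(2,4)$ and $(3,3)$, period $6$, and the class $G \equiv 2 \pmod 6$). Your candidate $(3,5)$, $(4,4)$ does check out numerically --- $(3,5)$ wins strictly for all $G \equiv 5 \pmod 8$, with the advantage growing geometrically --- so carrying out your self-consistency induction would yield a valid alternative counterexample.
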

	
	\begin{theorem}\label{thm:mvb_conjecture_wrong}
		The MVB conjecture is false.
	\end{theorem}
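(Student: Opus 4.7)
My plan is to disprove the conjecture by exhibiting an explicit MVB instance in which the variable attaining $\argmin_j \varphi_j$ is strictly suboptimal at the root for infinitely many gaps. I would take the two-variable instance $A = (3,3)$, $B = (2,4)$. Their characteristic trinomials are solvable by radicals and give $\varphi_A = 2^{1/3}$ and $\varphi_B = \sqrt{(1+\sqrt{5})/2}$; since $\varphi_A < \varphi_B$, variable $A$ is the unique minimizer of $\varphi$, and so the conjecture would predict that $A$ is optimal at the root for every sufficiently large $G$.

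First I would tabulate the MVB recurrence
\begin{equation}
t(G) = 1 + \min\bigl(\, 2\, t(G-3),\; t(G-2) + t(G-4)\, \bigr)
\end{equation}
for a short initial segment of $G$ and separate the values by residue modulo $6$. The data strongly suggest that each residue class eventually satisfies an affine recurrence of the form $t(G+6) = 4\, t(G) + c_r$, with $c_r$ depending only on $r = G \bmod 6$, and that $A$ and $B$ alternate as the root optimizer in a periodic pattern of period $6$.

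The main obstacle is to prove rigorously that these guessed closed forms are genuinely consistent with the MVB recurrence, because the $\min$ operator couples the six residue classes through the terms $t(G-2), t(G-3), t(G-4)$. I would handle this by a simultaneous induction over the six residues: assuming the closed forms at all gaps less than $G$, the two candidate expressions $2\, t(G-3)$ and $t(G-2) + t(G-4)$ are affine functions of $4^{k}$ whose difference can be computed and signed explicitly, reducing the induction to a finite set of algebraic identities in the six constants $c_r$ together with a base case covering the smallest gaps.

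Once the closed forms are established, the conclusion follows immediately: direct substitution gives the identity $2\, t(G-3) - t(G-2) - t(G-4) = 2$ for every $G \equiv 4 \pmod 6$ with $G \geq 4$, and analogously for every $G \equiv 2 \pmod 6$ with $G \geq 8$. At each such $G$ branching on $B$ produces a tree strictly smaller by exactly two nodes than branching on $A$, so $A$ fails to be optimal at the root. Since there are infinitely many such gaps, no threshold $H$ as postulated by the conjecture can exist, which proves Theorem~\ref{thm:mvb_conjecture_wrong}.
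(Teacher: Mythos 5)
Your proposal is correct and follows essentially the same route as the paper: the same counterexample instance $\{(2,4),(3,3)\}$, the same closed form for $t(G)$ split by residue modulo $6$ established by simultaneous induction (the paper does this in Appendix 1), and the same conclusion that branching on $(2,4)$ beats $(3,3)$ by exactly two nodes on an infinite arithmetic progression of gaps. The only cosmetic difference is that you exhibit both residue classes $G\equiv 2$ and $G\equiv 4 \pmod 6$ where the minimizer of $\varphi$ is strictly suboptimal, whereas the paper records only $G = 2 + 6k$; both observations are consistent with the closed form.
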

	
  \begin{proof}
    Consider the instance of MVB consisting of the variables $(2,4)$ and $(3,3)$, whose ratios, to five significant digits are $1.27202$ and $2^{\frac{1}{3}}=1.25992$ respectively. Such an MVB tree closing a gap of $G = 8$ is depicted in Figure~\ref{fig:tree_G8}. According to the MVB conjecture, it should be the case that $(3,3)$ is always branched on for all $G$ above some threshold. We prove in Appendix 1 that a closed-form solution to this instance is given by
    \begin{equation}\label{eqn:closed_form_mvb}
    t(G) = \begin{cases}
    2 \cdot 4^k - 1 & \textnormal{if } G = 6k, \\
    \frac{4}{3} \left(2 \cdot 4^k + 1 \right) - 1 & \textnormal{if } G = 1 + 6k, \\
    \frac{2}{3} \left(5 \cdot 4^k + 1 \right) - 1 & \textnormal{if } G = 2 + 6k, \\
    4 \cdot 4^k - 1 & \textnormal{if } G = 3 + 6k, \\
    \frac{2}{3} \left(8 \cdot 4^k + 1 \right) - 1 & \textnormal{if } G = 4 + 6k, \\
    \frac{4}{3} \left(5 \cdot 4^k + 1 \right) - 1 & \textnormal{if } G = 5 + 6k. \\
    \end{cases}
    \end{equation}
    It then suffices to observe from \eqref{eqn:closed_form_mvb} that for any gap of the form $G = 2 + 6k$, branching on $(2,4)$ yields a strictly smaller tree than branching on $(3,3)$, despite the fact that $\varphi(3,3) < \varphi(2,4)$.
  \end{proof}
  
  \begin{figure}[h]
        \centering
        \begin{forest}
        for tree={
          draw,
          circle,
          minimum size=2em, 
          inner sep=1pt
        },
          [0,
          [2,edge label={node[midway, above left] {2}},
            [5,edge label={node[midway, above left] {3}},
              [8,edge label={node[midway, left] {3}},
              ]
              [8,edge label={node[midway, right] {3}},
              ]
            ]
            [5,edge label={node[midway, above right] {3}},
              [8,edge label={node[midway, left] {3}},
              ]
              [8,edge label={node[midway, right] {3}},
              ]
            ]
          ]
          [4,edge label={node[midway, above right] {4}},
            [6,edge label={node[midway, left] {2}},
              [8,edge label={node[midway, left] {2}},
              ]
              [10,edge label={node[midway, right] {4}},
              ]
            ]
            [8,edge label={node[midway, right] {4}},
            ]
          ]
          ]
        \end{forest}
        \caption{A minimal MVB tree for the variables $(2,4)$ and $(3,3)$ closing the gap $G = 8$.}\label{fig:tree_G8}
      \end{figure}
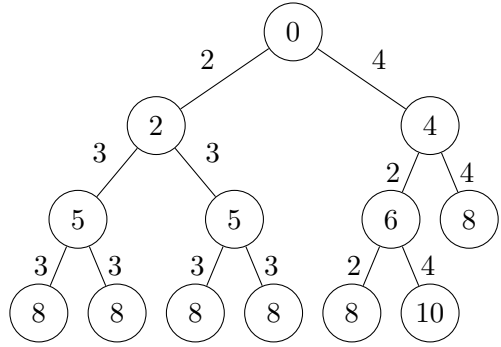
  
  \noindent This result may seem counter-intuitive.
  In the counterexample used in the proof of Theorem \ref{thm:mvb_conjecture_wrong}, we can verify that, for $i \in \{0, \dots, 5\}$,
  \begin{equation}
   \lim_{G \to \infty} \left( \frac{t(G+z)}{t(G)} \right)^\frac{1}{z} = \lim_{k \to \infty} \left( \frac{t(6k+i+12)}{t(6k+i)} \right)^\frac{1}{12} = \lim_{k \to \infty} \left( \frac{\alpha_i \times 4^{k+2} + \beta_i}{\alpha_i \times 4^k + \beta_i} \right)^\frac{1}{12} = 2^{\frac{1}{3}},
  \end{equation}
  where $\alpha_i$ and $\beta_i$ are constants corresponding to the case $i$ in \eqref{eqn:closed_form_mvb}.
  This indicates that the per-unit-of-gap mean growth rate is $2^{\frac{1}{3}}$, which means that the size of the tree doubles for every 3 additional units of gap to close.
  However, this growth is not constant for each additional unit of gap.
  Starting with $G=6k$, and going to $G=6k+1$, the tree asymptotically grows by a factor 
  \begin{equation}
    \lim_{k \to \infty} \left( \frac{t(6k+1)}{t(6k)} \right) = \lim_{k \to \infty} \left( \frac{ \frac{4}{3} \left(2 \cdot 4^k + 1 \right)}{2 \cdot 4^k - 1} \right) = \frac{4}{3}.
  \end{equation}
Taking all such unit gap increments, in the asymptotic case, the 6 growth rates are $\frac{4}{3}, \frac{5}{4}, \frac{6}{5}, \frac{4}{3}, \frac{5}{4}, \frac{6}{5}$, which means that the size of the tree grows by factors $\frac{4}{3}, \frac{5}{4}, \frac{6}{5}$, cyclically.
For every three additional units of gaps, the size of the tree increases by a factor $\frac{4}{3} \times \frac{5}{4} \times \frac{6}{5} = 2$, which matches the MVB ratio $\varphi=2^\frac{1}{3}$.

We can now understand why $(3,3)$ is not always branched on at the root node.
While it also leads to a mean per-unit growth rate of $2^\frac{1}{3}$, its asymptotic per-unit growth rates would be $2, 1, 1$, cyclically: if it branches, the size of the tree doubles, and there is no need to branch for the next two increments of gap.
In constrast, the strategy given in \eqref{eqn:closed_form_mvb} progressively grows the tree (but still doubles it for every three units of gaps).
As a result, for cases $G=6k$ and $G=6k+3$, the sizes of either strategy is the same, but for the next steps, i.e. cases $G=6k+1$ and $G=6k+4$, the ``MVB conjecture'' strategy grows the tree by a factor 2,  while the optimal stratey does so by a factor $\frac{4}{3}$, leading to a smaller tree.

Clearly, this intriguing phenomenon can only occur in special circumstances.
While it may not be an artefact of the MVB model, it may be too difficult to detect and exploit in a practical branching rule.
  
	\subsection{The General Variable Branching problem}\label{sec:gvb}
	
	The \textsc{General Variable Branching} (GVB) problem models a B\&B tree consisting of a set of variables $(l_i, r_i)$, each of which may be branched on a maximum of $m_i$ times. Consequently, unlike SVB and MVB, the GVB problem need not necessarily be feasible for all possible inputs. The size of the smallest GVB tree that closes a gap $G$ is given by
	
	\begin{equation}\label{eqn:gvb_recurrence}
	t(G,\textbf{m}) = \begin{cases}
	1 & \myif G \leq 0, \\
	\infty & \myif \mathbf{m} = \mathbf{0},  \\
	1 + \min\limits_{\substack{1 \leq i \leq n\\m_i > 0}} t(G-l_i, \mathbf{m} - \mathbf{I_i}) + t (G-r_i, \mathbf{m} - \mathbf{I_i})  & \textnormal{otherwise}.
	\end{cases}
	\end{equation}
	
	\noindent where $\mathbf{m}$ is the vector of multiplicities and $\mathbf{I_i}$ is an indicator vector on the $i^\textnormal{th}$ element. Le Bodic and Nemhauser showed that GVB is \#P-Hard under polynomial-time Turing reductions via a reduction from a variant of counting Knapsack solutions. They pose the question of whether GVB admits tighter hardness results, and whether it admits an approximation scheme. Here, we provide a Karp reduction to GVB from \kls{}, a well-studied problem in complexity theory, and, using a recent result of Haase and Kiefer \cite{haase2016complexity}, we show that GVB is also PP-Hard under polynomial-time Turing reductions.
	
	\medskip
	\noindent\textbf{Problem:} \kls{} \cite{garey2002computers}\\
	\textbf{Input:} Set $A$ with finite cardinal $N$, size $s(a) \in \mathbb{Z}^+$ for each $a \in A$, positive integers $K$ and $B$.\\
	\textbf{Question:} Are there $K$ or more distinct subsets $A' \subseteq A$ for which the sum of the sizes of the elements in $A'$ does not exceed $B$?
  
  \medskip
  \noindent\textbf{Problem:} \textsc{General Variable Branching}\ \cite{LeBodic2017}\\
  \textbf{Input:} $n$ variables encoded by $(l_i, r_i)$, $i = 1, . . . , n$, an integer $G > 0$, an integer $k > 0$, and a vector of multiplicities $m \in \mathbb{Z}^n_{>0}$. \\
  \textbf{Question:} Is there a B\&B tree with at most $k$ nodes that closes
  the gap $G$, branching on each variable $i$ at most $m_i$ times on each path from the root to a leaf?

	\begin{theorem}\label{thm:reduce_kls_to_gvbc}
	There exists a Karp reduction from \kls{} to the complement of GVB.
	\end{theorem}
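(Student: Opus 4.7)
The plan is to construct, from a given \kls{} instance $(A, s, K, B)$ with $|A| = N$, a GVB instance whose minimum B\&B tree size is a strictly monotone, polynomial-time-computable function $\phi$ of the number $F$ of fitting subsets of $A$. Having such a $\phi$, setting the GVB node-budget $k = \phi(K-1)$ yields the equivalence $F \geq K \Leftrightarrow \mathrm{OPT}_{\mathrm{GVB}} > k$, which is exactly a Karp reduction from \kls{} to the complement of GVB.

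For the construction I would introduce one GVB variable per element $a_i \in A$, with gains $(l_i, r_i)$ chosen so that the two branches at an internal node labelled by $a_i$ encode an include-vs-exclude decision for $a_i$ in a subset. With the gap $G$ set in terms of $B$ and multiplicities $m_i = 1$, each root-to-leaf path records a partial subset of $A$; paths encoding non-fitting subsets close early once the accumulated weight reaches $G$, while those encoding fitting subsets must continue branching. A small auxiliary ``filler'' mechanism---either a dedicated variable with a suitably large multiplicity, or a controlled tweak of the $l_i, r_i$---is introduced so that each fitting path closes by adding a fixed number of extra nodes. A concrete instantiation of this idea is to take $l_i$ small and $r_i$ of order $s(a_i)$, so that path weight is essentially $\sum_{i \in R} s(a_i)$ (the sum of items sent to the right) up to a small additive shift, and to choose $G$ of order $B$ so as to separate fitting from non-fitting sums.

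The proof then splits into an upper and a lower bound on $\mathrm{OPT}_{\mathrm{GVB}}$. The upper bound is witnessed by a canonical tree that branches on the item variables in a fixed order and then expands each surviving (fitting) leaf via the filler, yielding size $\phi(F)$. The lower bound---the main obstacle---asserts that every feasible B\&B tree has at least $\phi(F)$ nodes, and I would prove it by an adversary-style structural argument that associates to each fitting subset of $A$ a distinct subtree of any feasible B\&B tree, showing these subtrees together account for at least $\phi(F)$ nodes. The crux is to arrange the $l_i, r_i$ so that alternative branching orders cannot ``collapse'' two fitting subsets into a single subtree and cannot reach $G$ via a qualitatively different weight accumulation. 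Combined with the upper bound, this yields $\mathrm{OPT}_{\mathrm{GVB}} = \phi(F)$, and the choice $k = \phi(K-1)$ completes the reduction.
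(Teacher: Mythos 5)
Your overall architecture --- encode subsets of $A$ as root-to-leaf paths, make fitting subsets spawn a fixed number of extra nodes via a filler, obtain an optimal tree size $\phi(F)$ strictly monotone in the number $F$ of fitting subsets, and threshold just below $\phi(K)$ --- is exactly the architecture of the paper's reduction, which realises $\phi(F)=2^{N+1}-1+2F$ and sets $k=2^{N+1}-2+2K$. The problem is your concrete instantiation and the step you defer. Choosing $l_i$ small and $r_i$ of order $s(a_i)$ with $G$ of order $B$ does not yield a well-defined $\phi$: a path that branches left everywhere accumulates almost no weight, so the amount of filler a path needs depends on the path, and worse, if the filler is strong enough to close any residual gap in one branching (as your ``fixed number of extra nodes'' requires), then branching on the filler at the root closes $G$ with a constant-size tree and nothing forces an optimal tree to enumerate subsets at all. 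The lower bound you postpone to ``an adversary-style structural argument'' is not a technicality --- for gains of the shape you propose it is false, not merely hard.

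The missing idea is to pad \emph{every} gain by $C=\sum_{i} s(a_i)$: give item $a_i$ the gains $(C,\,C+s(a_i))$, give a single filler variable the gains $(C,C)$, all with multiplicity one, and set $G=NC+B+1$ (assuming WLOG $B<C$). Every branching then closes at least $C$, while a path with $d$ branchings closes at most $dC+C$; since $NC<G\le (N+1)C$, \emph{every} root-to-leaf path of \emph{any} feasible tree has exactly $N$ or $N+1$ internal ancestors, regardless of branching order. The tree is therefore a complete binary tree of depth $N$ in which a depth-$N$ node is expanded one further level exactly when its set of right branches has size-sum at most $B$ (the ordering of variables along a path being controlled by the observation that every item variable dominates the filler). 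This rigidity is what makes the optimal size exactly $2^{N+1}-1+2F$ and renders the lower bound immediate. Without an analogue of this uniform padding, the equivalence $F\ge K \Leftrightarrow \mathrm{OPT} > k$ that your whole plan rests on does not follow.
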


  	\begin{proof}
    We will reduce \kls\ to the complement of GVB. The proof closely resembles that of Theorem 8 in \cite{LeBodic2017}. Set $n = N + 1$, and define $C = \sum_{i=0}^N s(a_i)$. Suppose that $B < C$, else the instance is trivial. For each $a_i \in A$, create a variable with left and right gains $(C, C + s(a_i))$ and multiplicity one. Finally, variable $n$ has gains $(C, C)$ and multiplicity one. Set the gap to $G = NC + B + 1$ and the threshold to $k = 2^{N + 1} - 2 + 2K$. Observe that all variables dominate the variable $(C, C)$, and that the variables will be branched on in order of size. Each path from the root to level $N$ therefore corresponds to a subset $A' \subseteq A$, where branching left corresponds to not including some element $a_i$ in the gap, and branching right corresponds to including $a_i$. The gap closed at a particular node at level $N$, corresponding to the subset $A' \subseteq A$ (of right branches taken), is therefore
    \begin{equation*}
    g = NC + \sum_{a_i \in A'} |a_i|.
    \end{equation*}
    If the size of $A'$ exceeds $B$, then $g \geq NC + B + 1 = K$ and hence the node is a leaf. Otherwise, the gap $g$ is strictly less than $K$ and the node will have two children, branching on the variable $(C,C)$. The gap at this level is then
    \begin{equation*}
    g = NC + \sum_{a_i \in A'} |a_i| + C \geq G,
    \end{equation*}
    since $B < C$, and hence the children are leaves. Therefore, the number of subsets whose sum does not exceed $K$ is equal to half of the number of leaves at level $N + 1$, so we have
    \begin{equation*}
    (\textnormal{\# Subsets with sum $\leq B$}) = \frac{1}{2}(t(G) - (2^{N+1} -1)).
    \end{equation*}
    Therefore, if there exist at least $K$ subsets whose sum exceeds $B$, then
    \begin{equation*}
    t(G) \geq 2K + 2^{N+1} - 1 \Leftrightarrow \neg (t(G) \leq 2K + 2^{N+1} - 2).
    \end{equation*}
    We can conclude that the B\&B tree will have at most $2^{N+1} - 2 + 2K$ nodes if and only if there are fewer than $K$ distinct subsets $A' \subseteq A$ whose sum does not exceed $B$, so the answer to \kls\ is YES if and only if the answer to GVB is NO. Finally, observe that the reduction clearly takes polynomial time.
  \end{proof}
	
	\begin{corollary}
		We have the following complexity results for GVB
	\begin{enumerate}[leftmargin=20pt]
		\item GVB is NP-Hard, \#P-Hard and PP-Hard under polynomial-time Turing reductions.
		\item If \kls\ is NP-Hard under Karp reductions, then GVB is coNP-Hard under Karp reductions.
	\end{enumerate}
	\end{corollary}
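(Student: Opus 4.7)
For the second statement, I would observe that any Karp reduction from a problem $X$ to the complement of a problem $Y$ is, by contraposition, also a Karp reduction from the complement of $X$ to $Y$. Applying this to Theorem~\ref{thm:reduce_kls_to_gvbc} with $X$ being \kls{} and $Y$ being GVB, Karp-NP-hardness of \kls{} immediately yields Karp-NP-hardness of the complement of GVB, which is precisely Karp-coNP-hardness of GVB. No further machinery is required beyond the reduction already built.

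For the first statement, each of the three hardness claims follows from a short, separate argument layered on top of Theorem~\ref{thm:reduce_kls_to_gvbc}. The \#P-hardness of GVB under polynomial-time Turing reductions was already established by Le~Bodic and Nemhauser, so I would simply cite their result. For NP-hardness under Turing reductions, I would use that \kls{} is a classical NP-hard problem~\cite{garey2002computers}; Theorem~\ref{thm:reduce_kls_to_gvbc} then yields Karp (hence Turing) NP-hardness of the complement of GVB, and because a polynomial-time Turing reduction may freely negate its oracle responses, NP-hardness under Turing reductions is preserved under complementation, so GVB itself is NP-hard under Turing reductions. For PP-hardness, I would invoke the result of Haase and Kiefer~\cite{haase2016complexity}, which (after unpacking) establishes PP-hardness of the threshold-counting problem underlying \kls{}, and then compose it with Theorem~\ref{thm:reduce_kls_to_gvbc}; since PP is closed under complement, the resulting PP-hardness of the complement of GVB transfers to GVB itself under polynomial-time Turing reductions.

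The one place where care is needed is in aligning the precise PP-hard problem of Haase and Kiefer with \kls{}. If the two formulations do not coincide verbatim, a short intermediate polynomial-time reduction through the standard counting- or threshold-Knapsack problems will bridge the gap; this is a routine verification rather than a substantive obstacle, since all such threshold-counting Knapsack variants are known to be inter-reducible under polynomial-time Turing reductions.
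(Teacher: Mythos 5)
Your argument is essentially the one the paper intends: the corollary is stated without an explicit proof, and the implicit justification is exactly the composition of Theorem~\ref{thm:reduce_kls_to_gvbc} with the known hardness results for \kls{} (NP-hardness in the Turing sense from \cite{garey2002computers}, PP-hardness from \cite{haase2016complexity}), plus the \#P-hardness already established by Le Bodic and Nemhauser \cite{LeBodic2017}, together with the observation that a polynomial-time Turing reduction to a problem is also one to its complement. Your treatment of item 2 is correct. Your worry about aligning Haase and Kiefer's PP-hard problem with \kls{} is unnecessary: their result concerns precisely the \kls{} problem as defined in \cite{garey2002computers}, so no bridging reduction is needed.

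The one genuine slip is in your NP-hardness argument: you assert that Theorem~\ref{thm:reduce_kls_to_gvbc} yields ``Karp (hence Turing) NP-hardness of the complement of GVB.'' The Karp part does not follow, because \kls{} is only known to be NP-hard under polynomial-time \emph{Turing} reductions; whether it is NP-hard under Karp reductions is the long-standing open problem mentioned immediately after the corollary, and it is exactly the hypothesis of item 2. If the complement of GVB were unconditionally Karp-NP-hard, item 2 would hold unconditionally, contradicting the paper's framing. The fix is immediate: start from Turing NP-hardness of \kls{}, note that the Karp reduction of Theorem~\ref{thm:reduce_kls_to_gvbc} is in particular a Turing reduction, compose, and then negate oracle answers to pass from the complement of GVB to GVB itself. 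Your final conclusion (Turing NP-hardness of GVB) therefore stands; only the intermediate claim needs weakening.
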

	
  \noindent Whether \kls\ is NP-Hard under Karp reductions remains a long-standing open problem \cite{homer2011computability}.
	
	\section{Analysis of the GVB simulation problem}\label{sec:gvb_algo}
	
	Given a collection of variables $(l_i, r_i)$ with multiplicities $m_i$, instead of seeking the optimal configuration of a general variable branching tree, we can instead seek to measure the size of the tree that would be built by applying a particular variable selection rule. Formally, given a variable selection rule $f(\mathbf{m},G)$ returning the index of the predicted optimal variable, we can measure
	\begin{equation}\label{eqn:gvb_with_selection_rule}
	t(G,\textbf{m}) = \begin{cases}
	1 & \myif G \leq 0, \\
	\infty & \myif \mathbf{m} = \mathbf{0},  \\
	1 + t(G-l_i, \mathbf{m} - \mathbf{I_i}) + t(G-r_i, \mathbf{m} - \mathbf{I_i}) & \textnormal{otherwise},
	\end{cases}
	\end{equation}
	where $i = {f(\mathbf{m},G)}$. In this way, the GVB problem can be used as a tool to analyse and predict the performance of a given variable selection rule. We refer to this as the \emph{GVB simulation problem}. In \cite{LeBodic2017}, this technique is used to evaluate and compare the \texttt{product} and \texttt{ratio} scoring functions on random collections of variables before testing them on real MIP problems. This situation is only tractable due to the fact that the \texttt{product} and \texttt{ratio} scoring functions do not depend on the dual gap. 
	Indeed, for these two scoring functions, the order in which the variables are branched on (from the root to a leaf) is fixed, and the same variable is branched on at every node of the same depth.
	In this case, the recurrences \eqref{eqn:gvb_recurrence} and \eqref{eqn:gvb_with_selection_rule} can be evaluated in $O(n^2G)$ time and $O(nG)$ space with dynamic programming after pre-computing the variable order. In general, the variables may be branched on in different orders depending on the dual gap along the path, in which case \eqref{eqn:gvb_recurrence} and \eqref{eqn:gvb_with_selection_rule} naively require $O(n2^n G)$ time and $O(2^n G)$ space to consider all subsets of available variables, which is intractable for moderately sized test cases.
  
  However, all scoring functions that we know of respect the notion of \emph{dominance}. For such scoring functions, the state space of the dynamic program can be restricted to those subsets of variables that are \emph{dominance free}. In this section, we provide an analysis of the expected number of dominance free subsets in a collection of random variables. This analysis implies that the expected size of the state space for such a problem is sub-exponential in $n$.
	
	\subsection{Dominated and non-dominated variables}\label{sec:falseprop}
	
	We recall from \cite{LeBodic2017} the notion of dominance. We say that $(l_1, r_1)$ dominates $(l_2, r_2)$ if $l_1 \geq l_2$ and $r_1 \geq r_2$ with at least one of $l_1 > l_2$ or $r_1 > r_2$. We call a set of variables dominance free if it contains no dominated variables. Proposition 3 of \cite{LeBodic2017} claims that for MVB and GVB, it is never strictly optimal to branch on a dominated variable before those that dominate it.
	
	\begin{theorem}\label{thm:lebodic_false_proposition}
		Proposition 3 of \cite{LeBodic2017} is false in the case of the GVB problem. That is, there exists instances of GVB such that it is strictly optimal to branch on a dominated variable.
	\end{theorem}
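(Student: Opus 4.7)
The plan is constructive: exhibit an explicit GVB instance in which the optimal root branching is on a dominated variable, strictly beating the best tree that begins by branching on its dominator. First I would revisit Proposition~3 of \cite{LeBodic2017} to locate the step that fails once the multiplicities become finite. The standard interchange argument (which is the only plausible proof in the MVB setting) takes any optimal tree in which a dominated variable $B$ sits above its dominator $A$ and swaps them; since $l_A \geq l_B$ and $r_A \geq r_B$, each branch closes at least as much gap, so the rearranged tree still certifies $G$ and by induction has no greater size. In GVB the swap can demand more uses of $A$ along some root-to-leaf path than the budget $m_A$ allows, so the replacement is not feasible and the inductive argument collapses.

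With this diagnosis in hand, the construction strategy is to choose variables $A$ and $B$ with $A$ dominating $B$, together with multiplicities $(m_A, m_B)$ and a gap $G$ arranged so that branching on $A$ at the root decrements $m_A$ in \emph{both} resulting subtrees simultaneously and thereby prevents a later, essential use of $A$ in at least one of them, while branching on $B$ at the root preserves $m_A$ in both subtrees and enables a strictly smaller (or the only feasible) certificate. A natural template sets $m_A = 1$ and $m_B$ moderately large, with $G$ tuned so that the residual weight in one subtree can be closed only by a later application of $A$. If two variables alone produce only ties because of an inherent symmetry in how the multiplicity budget splits under the two root choices, I would add a third, incomparable variable whose sole purpose is to break that symmetry inside one of the subtrees.

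The verification itself is a direct dynamic programming evaluation of \eqref{eqn:gvb_recurrence}: tabulate $t(w, \mathbf{m})$ for every reachable state $(w, \mathbf{m})$ and read off the two root costs $t_A(G, \mathbf{m})$ and $t_B(G, \mathbf{m})$, exhibiting the strict inequality $t_B < t_A$. The main obstacle is identifying a small and presentable instance: a naive hand search over small parameters frequently yields ties, so a systematic enumeration over modest values of the gains, gap, and multiplicities—easily carried out by computer—is the surest path to a compact counterexample whose DP table can be displayed cleanly in the proof, in parallel with the style of the example in Figure~\ref{fig:tree_G8}.
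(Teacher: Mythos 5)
Your diagnosis of \emph{why} the interchange argument breaks down under finite multiplicities is sound, and your template (multiplicity-one dominators, a third pairwise-incomparable variable to avoid ties) is in fact very close to the shape of the counterexample that works. However, for an existence claim proved by counterexample, the explicit instance together with its verification \emph{is} the proof, and your proposal stops exactly there: it describes a search procedure for a counterexample rather than producing one and certifying the strict inequality $t_B < t_A$. As written, nothing in the proposal establishes that such an instance exists at all; the reader is left with a plausible heuristic and an unexecuted computation. That is a genuine gap, not a stylistic omission.

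For comparison, the paper's proof supplies the concrete instance: variables $(5,6)$, $(9,9)$, $(5,10)$, each with multiplicity one, and gap $G = 15$. Here $(5,6)$ is dominated by both $(9,9)$ and $(5,10)$, yet the unique optimal tree (Figure~\ref{fig:gvb_counterexample}) branches on $(5,6)$ at the root and has $9$ nodes, whereas every tree that branches first on $(9,9)$ or on $(5,10)$ requires at least $11$ nodes; this is checked by a short exhaustive case analysis of the remaining branching orders. Note that the mechanism is slightly different from the one you conjectured: it is not that branching on the dominator first renders the instance infeasible in a subtree, but that spending the small dominated variable at the root preserves the large variables for positions deeper in the tree where a single application of them closes the residual gap, yielding shallower subtrees on both sides. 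To complete your proof you would need to carry out the enumeration you describe and present the winning instance with its two tree sizes; without that, the theorem is not proved.
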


  \begin{proof}
    Consider the instance of GVB consisting of the variables $(5,6)$, $(9,9)$, $(5,10)$ all with multiplicity one. The optimal solution for this instance with a gap of $G = 15$ contains $9$ nodes as shown in Figure \ref{fig:gvb_counterexample}. At the root node, this tree branches on the variable $(5,6)$, which is dominated, but we note that any tree branching on $(5,10)$ or $(9,9)$ that closes the same gap has at least $11$ nodes.
  \end{proof}
  
  \begin{figure}[h]
    \centering
    \begin{forest}
      for tree={
        draw,
        circle,
        minimum size=2em, 
        inner sep=1pt
      },
      [0,
      [5,edge label={node[midway,above left] {5}},
      [10,edge label={node[midway, left] {5}},
      [19,edge label={node[midway, left] {9}},
      ]
      [19,edge label={node[midway, right] {9}},
      ]
      ]
      [15,edge label={node[midway, right] {10}},
      ]
      ]
      [6,edge label={node[midway,above right] {6}},
      [15,edge label={node[midway, left] {9}},
      ]
      [15,edge label={node[midway, right] {9}},
      ]
      ]
      ]
    \end{forest}
    \caption{A minimal GVB tree for the variables $(5,6), (9,9), (5,10)$ closing the gap $G = 15$.}\label{fig:gvb_counterexample}
  \end{figure}
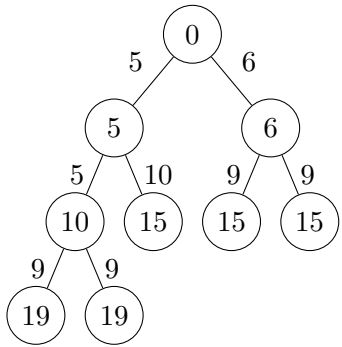	
	
	\noindent This result may seem surprising, similar to Theorem \ref{thm:mvb_conjecture_wrong}.
	Both of these results seem to indicate that branching rules that select variables based on a score that is computed independently of other variables and of the gap are in general not optimal for MVB or GVB.
  We note that similar phenomena have been observed in cutting plane generation. In \cite{owen2001disjunctive}, the authors demonstrate an algorithm, which, if it selects the optimal undominated cut, will never converge, but where a seemingly sub-optimal choice will.
	Again, we do not believe that this is an artifact of the MVB and GVB models, but rather cases that occur so rarely and would be so hard to detect with certainty in an implementation of the B\&B for MIP that they have been (and perhaps should be) ignored in practice.
	For the remainder of the paper we therefore focus on rules that only branch on non-dominated variables.
	
	\subsection{Bounding the expected number of non-dominated subsets}\label{sec:smaller_dp_state_space}
	
	The state space of the naive GVB dynamic program (all possible subsets of variables at every gap) is prohibitively large, but when considering a scoring function that only selects non-dominated variables, the only relevant states are those corresponding to a dominance-free subset of variables. For such a scoring function, we note that any subset of yet-to-be-used variables considered by the algorithm will never contain a variable that dominates a variable that has been used, and that such a subset, of which there should be significantly fewer, can be uniquely identified by its non-dominated variables. Here, we provide an analysis of the expected number of dominance-free subsets in a collection of random variables, which shows that the GVB simulation dynamic program can in fact be solved in sub-exponential time for scoring functions that respect dominance.
	
	\begin{theorem}\label{thm:count_nondominated_subsets}
		Given $n$ pairs $(l_i,r_i)_{1\leq i \leq n}$ such that each $l_i$ is unique, each $r_i$ is unique\footnote{Observe that if $l$ or $r$ contain non-unique values, the chance that they are dominated only increases. Hence analysing the unique case provides an upper bound in the general case}, and $l_i, r_i$ are independent random variables, the expected number of non-dominated subsets is given by
		\begin{equation}\label{eqn:expected_nondominated_subsets}
		\sum_{k=0}^n \frac{1}{k!} \binom{n}{k}.
		\end{equation}
	\end{theorem}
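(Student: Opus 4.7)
The plan is to exploit linearity of expectation and an exchangeability argument reducing the problem to counting decreasing subsequences in a uniform random permutation. Specifically, writing $X$ for the number of dominance-free subsets of $\{1,\ldots,n\}$, I would begin with
\begin{equation*}
\mathbb{E}[X] = \sum_{S \subseteq \{1,\ldots,n\}} \Pr[S \text{ is dominance-free}],
\end{equation*}
so it suffices to compute $\Pr[S \text{ is dominance-free}]$ as a function of $|S|$.

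Next I would relabel so that $l_1 < l_2 < \cdots < l_n$, which is almost surely well-defined since the $l_i$ are distinct. Under this relabeling, a subset $S = \{i_1 < i_2 < \cdots < i_k\}$ is dominance-free precisely when no variable in $S$ dominates another, and since the $l$-coordinates are already increasing along $i_1, \ldots, i_k$, this reduces to the requirement $r_{i_1} > r_{i_2} > \cdots > r_{i_k}$. The probability of this event depends only on the relative ranking of $(r_{i_1},\ldots,r_{i_k})$.

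I would then argue that this relative ranking is uniform on the symmetric group $S_k$. The $r_i$ are distinct and their values are independent of the $l_i$, so conditioning on the $l$-induced relabeling does not bias the ranking of any $k$-tuple of $r$-values. By exchangeability, each of the $k!$ orderings of $(r_{i_1},\ldots,r_{i_k})$ occurs with equal probability $1/k!$, giving $\Pr[S \text{ is dominance-free}] = 1/k!$. Summing over $S$ grouped by cardinality yields
\begin{equation*}
\mathbb{E}[X] = \sum_{k=0}^{n} \binom{n}{k}\cdot\frac{1}{k!},
\end{equation*}
which matches the claimed formula (the $k=0$ and $k=1$ terms correctly contribute the trivially dominance-free empty set and singletons).

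The main subtlety, and the step that requires the most care, is the justification that the relative ranking of $(r_{i_1},\ldots,r_{i_k})$ after relabeling by $l$-order is uniform on $S_k$. This is where the hypothesis that the $l_i$ are independent of the $r_i$ is essential: without it, the relabeling induced by sorting on $l$ could correlate with the ranking of the $r$-values and destroy the $1/k!$ symmetry. I would make this rigorous by noting that the joint distribution of the pairs is invariant under any permutation $\sigma$ acting on indices as $(l_i,r_i)\mapsto (l_{\sigma(i)},r_i)$, and then appealing to the fact that for any fixed realization of the multiset $\{r_i\}$, the induced ranking along the $l$-sorted order is a uniform random permutation of the ranks.
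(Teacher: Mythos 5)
Your proof is correct and follows essentially the same approach as the paper's: both compute $\Pr[S\text{ is dominance-free}] = 1/k!$ for a $k$-subset via an exchangeability argument (the paper sorts the pairs by descending $r$ and requires the $l$'s to be increasing, while you sort by $l$ and require the $r$'s to be decreasing) and then sum over subsets grouped by cardinality using linearity of expectation. Your justification of why the induced ranking is uniform on the symmetric group is somewhat more explicit than the paper's one-line appeal to independence, but the underlying argument is identical.
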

	
  \begin{proof}
    We consider all subsets of size $k$ for some fixed constant $k$. There are a total of $\binom{n}{k}$ such subsets. Take an arbitrary subset $S = \{ v_1, v_2, ..., v_k \}$ and suppose we sort the pairs in descending order of $r$, and then denote the $i^\textnormal{th}$ pair in this sorted order by $(l_i, r_i)$, for $1 \leq i \leq k$.
    This subset is non-dominated if and only if $S$ happens to be sorted in increasing order by $l$, since otherwise we would have $r_i < r_j$ and $l_i < l_j$ for some $i < j$. Since each pair $l_i, r_r$ are independent, each permutation of $l$'s is equally likely, and since there are $k!$ possible permutations,
    \begin{equation*}
    \Pr(S\textnormal{ is non-dominated}) = \frac{1}{k!}.
    \end{equation*}
    Hence the expected number of non-dominated subsets of size $k$ is
    \begin{equation}\label{eqn:nondominated_summand}
    \frac{1}{k!} \binom{n}{k}.
    \end{equation}
    We conclude that the expected number of non-dominated subsets of all sizes is
    \begin{equation}
    \sum_{k=0}^n \frac{1}{k!} \binom{n}{k}.
    \end{equation}
  \end{proof}
  
	\begin{theorem}\label{thm:asymptotic_nondominated_subsets}
		The expected number of non-dominated subsets is sub-exponential in $n$. In particular, a bound on the expected number of subsets of non-dominated variables is
		\begin{equation*}
		\sum_{k=0}^n \frac{1}{k!} \binom{n}{k} = O\left(e^{2\sqrt{n}}\right), \qquad n \to \infty.
		\end{equation*}
	\end{theorem}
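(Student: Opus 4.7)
The plan is to control the finite sum by an infinite series that matches a well-known special function. Using the elementary estimate $\binom{n}{k}=\frac{n(n-1)\cdots(n-k+1)}{k!}\leq\frac{n^{k}}{k!}$, the sum is bounded above by
\[
\sum_{k=0}^{n}\frac{1}{k!}\binom{n}{k}\;\leq\;\sum_{k=0}^{\infty}\frac{n^{k}}{(k!)^{2}}\;=\;I_{0}(2\sqrt{n}),
\]
where $I_{0}$ denotes the modified Bessel function of the first kind. The theorem then follows at once from the classical asymptotic $I_{0}(x)\sim e^{x}/\sqrt{2\pi x}$ as $x\to\infty$, which gives $I_{0}(2\sqrt{n})=O(e^{2\sqrt{n}}/n^{1/4})$ and in particular $O(e^{2\sqrt{n}})$.

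For a self-contained version that avoids quoting Bessel asymptotics, I would instead estimate the terms $a_{k}=n^{k}/(k!)^{2}$ directly. Since $a_{k+1}/a_{k}=n/(k+1)^{2}$, the sequence is unimodal with peak near $k=\sqrt{n}$; a single application of Stirling's formula gives the peak value $a_{\lfloor\sqrt{n}\rfloor}=O(e^{2\sqrt{n}}/\sqrt{n})$. I would then split the sum at $k=\lceil 2\sqrt{n}\rceil$. The lower part contains $O(\sqrt{n})$ terms, each bounded by the peak, contributing $O(e^{2\sqrt{n}})$. The upper part has ratio $a_{k+1}/a_{k}\leq n/(2\sqrt{n}+1)^{2}\leq 1/4$ and is therefore dominated by a convergent geometric series of total mass $O(e^{2\sqrt{n}}/\sqrt{n})$. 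Summing the two contributions yields the claim.

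The main obstacle I expect is losing the extra $\sqrt{n}$ factor that a purely termwise bound would introduce. Stirling alone shows that the peak of $a_{k}$ is of order $e^{2\sqrt{n}}/\sqrt{n}$, but naively multiplying by the $n+1$ indices in the summation yields only $O(\sqrt{n}\,e^{2\sqrt{n}})$, which misses the target by a factor of $\sqrt{n}$. The crucial observation in both approaches above is that once $k$ exceeds a constant multiple of $\sqrt{n}$ the ratio $a_{k+1}/a_{k}$ is bounded away from $1$, so the tail decays geometrically and the effective number of non-negligible terms is only $O(\sqrt{n})$; this exactly cancels the $1/\sqrt{n}$ attenuation at the peak and delivers the bound $O(e^{2\sqrt{n}})$.
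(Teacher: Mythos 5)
Your proof is correct, and it takes a genuinely different route from the paper's. The paper keeps the exact summand $\frac{1}{k!}\binom{n}{k}$ throughout: it locates the maximizing index $k^*=\sqrt{n+\frac14}-\frac12$ by differentiating a gamma-function extension of the denominator (Lemma~\ref{lem:summand_max}), applies Stirling at $k=c\sqrt{n}$ (Lemma~\ref{lem:summand_behaviour}), and multiplies the peak value by the $O(\sqrt n)$ indices at which that estimate has not yet begun to decay. You instead majorize termwise via $\binom{n}{k}\le n^k/k!$, which turns the sum into a partial sum of $\sum_{k\ge 0} n^k/(k!)^2=I_0(2\sqrt n)$, and the classical asymptotic $I_0(x)\sim e^x/\sqrt{2\pi x}$ finishes in one line with the slightly sharper bound $O\left(e^{2\sqrt n}/n^{1/4}\right)$; the majorization costs only a bounded factor near the peak $k\approx\sqrt n$ (the ratio $\binom{n}{k}k!/n^k=\prod_{i<k}(1-i/n)$ tends to $e^{-1/2}$ there), so nothing essential is lost. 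Your elementary fallback is also sound: the ratio test on $a_k=n^k/(k!)^2$ puts the peak of size $O(e^{2\sqrt n}/\sqrt n)$ near $k=\sqrt n$, the head of $O(\sqrt n)$ terms contributes $O(e^{2\sqrt n})$, and the tail beyond $k=2\sqrt n$ is geometric with ratio at most $1/4$. In one respect this is tighter than the paper's final step: the paper asserts that only $\floor{e\sqrt n+1}$ terms ``contribute to the asymptotic behaviour'' without explicitly summing the discarded tail, whereas your geometric bound makes that dismissal rigorous. What the paper's calculation buys in exchange is the precise location of the maximizer of the original summand, which is of independent interest but not needed for the stated $O(e^{2\sqrt n})$ bound.
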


  From Theorem~\ref{thm:asymptotic_nondominated_subsets}, it follows that the GVB simulation problem can be solved in expected sub-exponential time in $n$, specifically, in \mbox{$O(\text{poly}(n)e^{2\sqrt{n}}\cdot G)$} time, which significantly improves upon the naive $O(n2^nG)$ bound. We demonstrate the utility of this improved analysis in Appendix 2, where we perform simulations on the \texttt{svts} rule of \cite{LeBodic2017} and compare it to the \texttt{product} and \texttt{ratio} rules. The proof of Theorem~\ref{thm:asymptotic_nondominated_subsets} follows from several Lemmas.
  
  \begin{lemma}\label{lem:summand_max}
    The summand \eqref{eqn:nondominated_summand} satisfies
    \begin{equation}
    \max_{0 \leq k \leq n} \frac{1}{k!}\binom{n}{k} = O\left( \frac{1}{(k^*)!} \binom{n}{k^*} \right),
    \end{equation}
    for
    \begin{equation}
    k^* = \sqrt{n + \frac{1}{4}} - \frac{1}{2}
    \end{equation}
    as $n \to \infty$.
  \end{lemma}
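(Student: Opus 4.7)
The plan is to locate the continuous maximizer of $a_k := \frac{1}{k!}\binom{n}{k}$ via Stirling's formula and then transfer the estimate to the integer maximum by exploiting unimodality. The first step is to compute the ratio of consecutive terms,
\begin{equation*}
\frac{a_{k+1}}{a_k} = \frac{n - k}{(k+1)^2},
\end{equation*}
which is strictly decreasing in $k$. Hence $(a_k)_{0 \leq k \leq n}$ is unimodal, and its integer mode $k^\dagger$ is the largest index with $n - k \geq (k+1)^2$, so $k^\dagger = \Theta(\sqrt{n})$ and $|k^\dagger - k^*| = O(1)$.

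Next I would extend $a_k$ to a continuous function via the Gamma function and apply the basic Stirling estimate $\log m! = m\log m - m + O(\log m)$. Writing $\log a(k) = n\log n - 2k\log k - (n-k)\log(n-k) + k + O(\log n)$ and differentiating in $k$ gives the first-order condition $\log(n - k) = 2\log k$, whose positive root is precisely $k^* = \sqrt{n + 1/4} - 1/2$. A quick second-derivative check confirms this is a strict maximum of the continuous extension.

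To conclude $\max_k a_k = O(a_{k^*})$, I would use the fact that $|k^\dagger - k^*| = O(1)$ and that both values lie in the $\Theta(\sqrt n)$ regime. Telescoping the ratio formula from the first step then bounds $a_{k^\dagger}/a_{\lfloor k^* \rfloor}$ by a product of $O(1)$ factors, each of the form $1 + o(1)$, hence $O(1)$. A direct Gamma-function comparison between $a_{\lfloor k^* \rfloor}$ and $a_{k^*}$ completes the bound.

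The main obstacle is the non-integrality of $k^*$: the right-hand side of the claimed bound is evaluated via $\Gamma$, so one must verify that rounding to an adjacent integer costs only a multiplicative constant. This is routine given the Stirling expansion above, but it is the one non-trivial bridge in the argument; the unimodality and the location of the continuous maximum both follow immediately from the explicit ratio formula.
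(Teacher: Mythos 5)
Your proposal is correct, and it reaches the lemma by a genuinely different route even though it lands on the same first-order condition $\log(n-k)=2\log k$ and hence the same $k^*=\sqrt{n+1/4}-\tfrac12$. The paper works entirely with the continuous extension: it differentiates $\Gamma(k+1)^2\Gamma(n-k+1)$ via the digamma identity $\Gamma'(x+1)=\Gamma(x+1)(H_x-\gamma)$, derives the critical-point equation $2H_k-H_{n-k}=\gamma$, and solves it asymptotically by replacing $H_x$ with $\log x+\gamma$; it then simply asserts that the maximum occurs at $k^*$, leaving the passage from the continuous critical point to the integer maximum (and the fact that the value, not just the location, is preserved up to a constant) implicit. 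Your proof supplies exactly that missing bridge: the consecutive-term ratio $a_{k+1}/a_k=(n-k)/(k+1)^2$ is strictly decreasing, so the sequence is unimodal with integer mode within $O(1)$ of $k^*$, and since each ratio in the $\Theta(\sqrt n)$ regime equals $1+O(1/\sqrt n)$, telescoping over the $O(1)$ intervening indices, together with a final Gamma-function comparison between $a_{\lfloor k^*\rfloor}$ and $a_{k^*}$ (where the log-derivative vanishes and the second derivative is $O(1/\sqrt n)$), costs only a $1+o(1)$ factor. The trade-off is that your argument is more elementary (no digamma identity) and more rigorous on the integer-versus-real issue, at the price of the unimodality and telescoping bookkeeping. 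The only nit is an off-by-one in your description of the mode: the maximum sits at one more than the largest $k$ with $n-k\ge(k+1)^2$, but this is absorbed by your $O(1)$ slack.
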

  
  \begin{proof}
    Consider maximizing the expression 
    \begin{equation}\label{eqn:summand}
    \frac{1}{k!}\binom{n}{k} = \frac{n!}{(k!)^2(n-k)!},
    \end{equation}
    over $0 \leq k \leq n$. This is equivalent to minimizing the denominator, $(k!)^2(n-k)!$. We consider a suitable analytic extension, valid for real $n$ and $k$, in terms of the gamma function $\Gamma$, and seek the minimum over $0 \leq k \leq n$ of
    \begin{equation} \label{eqn:analytic_denominator}
    \Gamma(k+1)^2 \Gamma(n-k+1).
    \end{equation}
    We apply the fact that the derivative of the gamma function is given by
    \begin{equation}
    \Gamma'(x+1) = \Gamma(x+1)\psi(x+1),
    \end{equation}
    where $\psi$ is the digamma function, which for a positive integer $x$, satisfies $\psi(x+1) = H_x - \gamma$, where $H_x = \sum_{i=1}^x \frac{1}{i}$ is the $x^\textnormal{th}$ Harmonic number and $\gamma \approx 0.57721567$ is the Euler-Mascheroni constant.
    Differentiating \eqref{eqn:analytic_denominator}, we find
    \begin{equation}
    2(\Gamma(k+1))^2(H_k - \gamma)\Gamma(n-k+1) - (\Gamma(k+1))^2\Gamma(n-k+1)(H_{n-k} - \gamma).
    \end{equation}
    Collecting terms and equating to zero, the minimum is a solution to the equation
    \begin{equation}\label{eqn:harmonic_min}
    2H_k - H_{n-k} = \gamma.
    \end{equation}
    Since $H_n \to \log(n) + \gamma$ as $n \to \infty,$ we seek a solution to
    \begin{equation}
    2 (\log(k) + \gamma) - (\log(n-k) + \gamma) = \gamma.
    \end{equation}
    Note that this substitution is only valid under the assumption that both $k \to \infty$ and $n - k \to \infty$ as $n \to \infty$. This is justified since if this were not the case, $k$ or $n - k$ would be bounded, and hence as $n \to \infty$, Equation~\eqref{eqn:harmonic_min} could not be satisfied, as one of the terms on the left-hand side would be bounded while the other goes to $\infty$. Using simple properties of the logarithm, we then arrive at
    \begin{equation}
    k^2 + k - n = 0.
    \end{equation}
    Therefore, asymptotically, the minimum (and hence the maximum of \eqref{eqn:summand}) occurs at
    \begin{equation}
    k^* = \frac{\sqrt{4n + 1} - 1}{2} = \sqrt{n + \frac{1}{4}} - \frac{1}{2},
    \end{equation}
    as $n \to \infty$.
  \end{proof}
  
  \noindent Note that in Lemmas \ref{lem:summand_behavior} and \ref{lem:approx_max} we are abusing the factorial and binomial coefficient notation by using them with non-integer arguments, but these statements can be written using the gamma function.
  \begin{lemma}\label{lem:summand_behavior}
    Let $k = c\sqrt{n}$ for any real $0 < c \leq \sqrt{n}$. The summand satisfies
    \begin{equation}
    \frac{1}{k!}\binom{n}{k} = O\left( \frac{e^{-c^2}}{ c \sqrt{n - c\sqrt{n}}} \left( \frac{e}{c} \right) ^{2c\sqrt{n}} \right),
    \end{equation}
    as $n \to \infty$.
  \end{lemma}
  
  \begin{proof}
    We have
    \begin{equation}
    \frac{1}{k!}\binom{n}{k} = \frac{n!}{((c\sqrt{n})!)^2(n-c\sqrt{n})!}.
    \end{equation}
    Using Stirling's formula, we obtain the asymptotic expansion as $n \to \infty$,
    \begin{equation}
    \frac{1}{k!}\binom{n}{k} \sim \frac{\sqrt{2\pi n} \left(\frac{n}{e}\right)^n}{\left(\sqrt{2 \pi c\sqrt{n}} \left(\frac{c\sqrt{n}}{e}\right)^{c\sqrt{n}}\right)^2\left(\sqrt{2\pi(n-c\sqrt{n})} \left(\frac{n-c\sqrt{n}}{e}\right)^{n - c\sqrt{n}} \right)}.
    \end{equation}
    Canceling common terms, we find
    \begin{equation}
    \begin{split}
    \frac{1}{k!}\binom{n}{k} &\sim \frac{\left(\frac{n}{e}\right)^n}{2\pi c \left(\frac{c\sqrt{n}}{e}\right)^{2c\sqrt{n}} \sqrt{n - c\sqrt{n}} \left(\frac{n - c\sqrt{n}}{e}\right)^{n - c\sqrt{n}}}  \\
    &=\ \frac{e^{c\sqrt{n}} n^{n - c\sqrt{n}}}{2\pi c^{2c\sqrt{n}+1} (n - c\sqrt{n})^{n - c\sqrt{n}} \sqrt{n - c\sqrt{n}}} \\
    &=\ \frac{e^{c\sqrt{n}}}{2\pi c^{2c\sqrt{n}+1} \sqrt{n - c\sqrt{n}}}\left(\frac{n}{n - c\sqrt{n}}\right)^{n - c\sqrt{n}},
    \end{split}
    \end{equation}
    as $n \to \infty$. Considering the term on the right, we have
    \begin{equation}
    \left(\frac{n}{n - c\sqrt{n}}\right)^{n - c\sqrt{n}} = \left(1 - \frac{c}{\sqrt{n}}\right)^{\sqrt{n}(c - \sqrt{n})},
    \end{equation}
    and using the fact that $\left(1 - c/n \right)^n \to e^{-c}$ as $n \to \infty$, we deduce that
    \begin{equation}
    \left(1 - \frac{c}{\sqrt{n}}\right)^{\sqrt{n}(c - \sqrt{n})} = O\left( e^{c\sqrt{n} - c^2} \right),
    \end{equation}
    as $n \to \infty$. The asymptotic behavior of the summand is therefore
    \begin{equation}\label{eqn:asymptotic_summand}
    \begin{split}
    \frac{1}{k!}\binom{n}{k} &= O\left( \frac{e^{c\sqrt{n}}}{2\pi c^{2c\sqrt{n}+1} \sqrt{n - c\sqrt{n}}} e^{c\sqrt{n} - c^2} \right), \\
    &= O\left( \frac{e^{-c^2}}{c \sqrt{n - c\sqrt{n}}} \left( \frac{e}{c} \right) ^{2c\sqrt{n}} \right),
    \end{split}
    \end{equation}
    as $n \to \infty$.
  \end{proof}
  
  \begin{lemma}\label{lem:approx_max}
    For
    \begin{equation}
    k^* = \sqrt{n + \frac{1}{4}} - \frac{1}{2},
    \end{equation}
    we have
    \begin{equation}
    \frac{1}{(k^*)!}\binom{n}{k^*} = O\left( \frac{1}{(\sqrt{n})!} \binom{n}{\sqrt{n}} \right).
    \end{equation}
  \end{lemma}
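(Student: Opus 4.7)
The plan is to apply Lemma~\ref{lem:summand_behaviour} to the summand at two values of $c$, one corresponding to $k = k^*$ and one corresponding to $k = \sqrt{n}$, and show that the resulting asymptotic expressions differ by only a bounded multiplicative constant. Since $k^*$ was shown in Lemma~\ref{lem:summand_max} to be the asymptotic maximiser of $f(k) = \frac{1}{k!}\binom{n}{k}$, we automatically have $f(k^*) \geq f(\sqrt{n})$, so the content of the claim is the matching upper bound $f(k^*) = O(f(\sqrt{n}))$.

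First I would rewrite $k^* = c_n \sqrt{n}$ with $c_n = (\sqrt{n+1/4} - 1/2)/\sqrt{n}$. Expanding $\sqrt{n + 1/4} = \sqrt{n}(1 + 1/(4n))^{1/2} = \sqrt{n} + 1/(8\sqrt{n}) + O(n^{-3/2})$ gives $c_n = 1 - 1/(2\sqrt{n}) + O(1/n)$, so $c_n \to 1^-$. I would then substitute $c = c_n$ and $c = 1$ into the asymptotic expression supplied by Lemma~\ref{lem:summand_behaviour}. The prefactor $e^{-c^2}/(c\sqrt{n - c\sqrt{n}})$ evaluated at $c_n$ and at $1$ differ only by a factor tending to $1$, since $e^{-c_n^2}/c_n \to e^{-1}$ and $\sqrt{n - c_n\sqrt{n}}/\sqrt{n - \sqrt{n}} \to 1$.

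The substantive step is controlling the ratio of the exponential factors $(e/c_n)^{2c_n\sqrt{n}}$ and $e^{2\sqrt{n}}$. Writing $(e/c_n)^{2c_n\sqrt{n}} = \exp\bigl(2c_n\sqrt{n}(1 - \log c_n)\bigr)$ and using $\log(1+x) = x + O(x^2)$, I would compute $\log c_n = -1/(2\sqrt{n}) + O(1/n)$, whence $1 - \log c_n = 1 + 1/(2\sqrt{n}) + O(1/n)$. Multiplying by $2c_n\sqrt{n} = 2\sqrt{n} - 1 + O(1/\sqrt{n})$ gives an exponent equal to $2\sqrt{n} + O(1/\sqrt{n})$, since the order-$1$ terms ($-1$ from the expansion of $2c_n\sqrt{n}$, and $+1$ from the cross term $2\sqrt{n} \cdot 1/(2\sqrt{n})$) cancel exactly. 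Thus $(e/c_n)^{2c_n\sqrt{n}} = e^{2\sqrt{n}} \cdot e^{O(1/\sqrt{n})} = O(e^{2\sqrt{n}})$, and the claimed $O$-bound follows.

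The main obstacle is this cancellation in the exponent: because the exponent scales like $\sqrt{n}$, any term of order $1/\sqrt{n}$ in $1 - \log c_n$ or in $2c_n\sqrt{n}$ potentially contributes a factor of order $1$ to the exponent, so it is essential to verify that the first-order corrections match. This matching is not coincidental; it is precisely the optimality condition $2H_k - H_{n-k} = \gamma$ derived in the proof of Lemma~\ref{lem:summand_max}, which pins $k^*$ at the value that makes the leading-order variation vanish. Once this cancellation is checked, the remaining $O(1/\sqrt{n})$ corrections are absorbed into the implicit constant and the proof concludes routinely.
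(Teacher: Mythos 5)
Your argument is correct, but it takes a genuinely different route from the paper. The paper's proof never invokes Stirling: it writes $k^* = \hat{k} - \tfrac12$ with $\hat{k} = \sqrt{n+\tfrac14}$, applies the factorial-shift inequality $(m+\Delta)! \geq m!\,m^{\Delta}$ to bound $\bigl((\hat{k}-\tfrac12)!\bigr)^2\bigl(n-\hat{k}+\tfrac12\bigr)!$ from below by $(\hat{k}!)^2\hat{k}^{-1}(n-\hat{k})!\sqrt{n-\hat{k}}$, observes that the resulting correction factor $\hat{k}/\sqrt{n-\hat{k}}$ tends to $1$, and then replaces $\hat{k}$ by $\sqrt{n}$ in the limit. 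Your proof instead pushes both $k^*$ and $\sqrt{n}$ through the Stirling asymptotic of Lemma~\ref{lem:summand_behaviour} and verifies the order-one cancellation in the exponent $2c_n\sqrt{n}(1-\log c_n) = 2\sqrt{n} + O(1/\sqrt{n})$; your computation of that cancellation is right, and your remark that it reflects the stationarity condition from Lemma~\ref{lem:summand_max} is a genuine insight the paper does not articulate. Two caveats you should make explicit. First, Lemma~\ref{lem:summand_behaviour} as stated gives only a one-sided $O$-bound; to conclude $\frac{1}{(k^*)!}\binom{n}{k^*} = O\bigl(\frac{1}{(\sqrt{n})!}\binom{n}{\sqrt{n}}\bigr)$ you also need a matching \emph{lower} bound on the right-hand side at $c=1$, which is available because the lemma's proof actually derives an asymptotic equivalence via Stirling before weakening to $O$ --- but you must say you are using that stronger form. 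Second, you apply the lemma at $c = c_n$ depending on $n$, whereas it is stated for fixed $c$ as $n \to \infty$; since you redo the expansion explicitly and $c_n \to 1$, this is harmless, but it should be acknowledged rather than cited as a black box. With those two points patched, your proof is complete; the trade-off is that your route is more computational but makes the exponentially sensitive step transparent, while the paper's is shorter but leans on the somewhat glossed final substitution of $\sqrt{n}$ for $\hat{k}$ inside the factorials.
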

  
  \begin{proof}
    We make use of the fact that $(n+\Delta)! \geq n! n^\Delta$. This property holds for integer $\Delta$, but can also be shown, using properties of the Gamma function to be valid for non-integral $\Delta$. 
    Let $\hat{k} = \sqrt{n + \frac{1}{2}}$, and write, using the aforementioned fact
    \begin{equation}
    \begin{split}
    \frac{1}{(k^*)!}\binom{n}{k^*} &= \frac{n!}{\left( \left( \hat{k} - \frac{1}{2} \right)! \right)^2 \left( n - \hat{k} + \frac{1}{2} \right)! }, \\
    &\leq \frac{n!}{\left(\hat{k}!\right)^2 \hat{k}^{-1} \left(n - \hat{k}\right)! (n - \hat{k})^\frac{1}{2} }.
    \end{split}
    \end{equation}
    Now we can write
    \begin{equation}
    \begin{split}
    \frac{1}{(k^*)!}\binom{n}{k^*} &\leq \frac{n!}{(\hat{k}!)^2 (n - \hat{k})!} \cdot \frac{\hat{k}}{\sqrt{n - \hat{k}}}, \\
    &= O\left( \frac{n!}{(\hat{k}!)^2 (n - \hat{k})!} \right),
    \end{split}
    \end{equation}
    which follows from the fact that
    \begin{equation}
    \frac{\sqrt{n + \frac{1}{4}}}{\sqrt{n - \sqrt{n +\frac{1}{4}}}} \to 1,
    \end{equation}
    as $n \to \infty$. Finally, since $\hat{k} \to \sqrt{n}$ as $n \to \infty$, we have
    \begin{equation}
    \frac{1}{(k^*)!}\binom{n}{k^*} = O\left( \frac{n!}{(\sqrt{n}!)^2 (n - \sqrt{n})!} \right) = O\left( \frac{1}{(\sqrt{n})!} \binom{n}{\sqrt{n}} \right).
    \end{equation}
  \end{proof}
  
  \paragraph{Proof of Theorem~\ref{thm:asymptotic_nondominated_subsets}. } By Lemma~\ref{lem:summand_max}, Lemma~\ref{lem:summand_behavior} with $c = 1$, and Lemma~\ref{lem:approx_max}, we bound the maximum term of the sum as $n \to \infty$,
  \begin{equation}
  \frac{1}{(\sqrt{n})!}\binom{n}{\sqrt{n}} = O\left( \frac{e^{2\sqrt{n} - 1}}{2\pi \sqrt{n - \sqrt{n}}}\right).
  \end{equation}
  Then, observe that when $c > e$, \eqref{eqn:asymptotic_summand} is a decaying exponential, hence by Lemma~\ref{lem:summand_behavior} there are at most $\floor{e\sqrt{n}+1}$ terms that contribute to the asymptotic behavior of the sum, and therefore,
  \begin{equation}
  \begin{split}
  \sum_{k=0}^n \frac{1}{k!} \binom{n}{k} &= O \left( (e\sqrt{n}+1) \frac{e^{2\sqrt{n} - 1}}{\sqrt{n - \sqrt{n}}} \right), \\
  &= O\left(e^{2\sqrt{n}}\right),
  \end{split}
  \end{equation}
  as $n \to \infty$.

  \section{Implementation and Experiments}
  
  From their abstract B\&B model, Le Bodic and Nemhauser derive two scoring functions for MIP, the \texttt{ratio} rule, and the \texttt{svts} rule.
  \begin{itemize}[leftmargin=*]
    \item The \texttt{ratio} rule first estimates the height of the branch and bound tree as $\floor{G/l}$, and if this is greater than $10$, score the variables based on their ratio. If the estimated tree height is at most $10$, then the \texttt{product} score is used instead.
    \item The \texttt{svts} rule scores variables based on their single variable tree size $t(G)$. For small gaps, or, more specifically, when $\ceil{G/r} > D$ for some threshold $D$, the value of $t(G)$ is computed exactly using the formula of Le Bodic and Nemhauser~\cite{LeBodic2017}. Otherwise, $t(G')$ is computed for the gap $G' = rD$, and then the approximation formula~\eqref{eqn:svb_approx} is used to obtain an approximate value of $t(G)$ given by $t(G') \cdot \varphi^{G - G'}$. The default value of $D$ is $100$.
  \end{itemize} 
  Note that for sufficiently large gaps, \texttt{ratio} and \texttt{svts} are equivalent. In this section, we describe our improved implementation of these scoring rules and provide performance benchmarks that demonstrate their benefits.
  
  \subsection{Efficient computation of the ratio}\label{sec:compute_phi}
  
  Since the ratio $\varphi$ of all non-dominated candidate variables is computed at every node of height greater than $10$ for the \texttt{ratio} rule, and for any node at which the dual gap is high for \texttt{svts}, computing it efficiently is very important.
  Since we showed that there exists no closed-form formula for $\varphi$, we describe here an improved numerical algorithm for computing it. Similarly to \cite{LeBodic2017}, we first scale the gains of a variable $(l,r)$ to $(1,\frac{r}{l})$, so that the resulting value computed is precisely $\varphi^l$. If $r/l \leq 200$, then we use Laguerre's method \cite{acton1990numerical} to find the root of the scaled trinomial, $x^\frac{r}{l} - x^{\frac{r}{l}-1} - 1.$ This method typically converges within two or three iterations. Otherwise, if $r/l > 200$, then we use the fixed-point method given by the following recurrence:
  \begin{equation}
  f(x) = \left(1 - \frac{1}{x}\right)^{-\frac{l}{r}}.
  \end{equation}
  We make an additional optimisation by caching the value of the ratio for each variable. Since in practice, the gains of a variable, and hence also its ratio do not change significantly between two nodes (as it is often given by pseudocosts), we initialize the method at the variable's most recently computed ratio. Experiments show that this improved fixed-point method converges in roughly half as many iterations as the original fixed-point method given in \cite{LeBodic2017}.

  \subsection{Performance tests}\label{sec:mip_tests}
  
  We implemented the \texttt{ratio} and \texttt{svts} scoring functions in SCIP 6.0 \cite{GleixnerEtal2018OO}. No additional modifications besides those required to perform branching decisions were made. We test the performance of our implementation on the MIPLIB 2017 Benchmark Set, the standard performance benchmarking suite for MIP solvers \cite{miplib2017}. The set contains 240 problem instances representing a diverse range of real-world problems. All experiments were run on a cluster with 48 nodes equipped with Intel Xeon Gold 5122 at 3.60GHz and 96GB RAM. Jobs were run exclusively on a node. To reduce variability, each problem is solved three times, with different random seeds used to permute the input, resulting in a total of 720 instances. Each instance is given a time limit of two hours. To analyze the results, we use a similar methodology to Le Bodic and Nemhauser~\cite{LeBodic2017}. We ignore in our results any problem that is solved in less than one second, solved at the root node without any branching (e.g. via presolving), or that is not solved by any of the scoring functions at all. For each problem, we compute the minimum number of instances $N$ solved by any scoring function, and consider for each scoring function, only their best $N$ times and node counts. This gives scoring functions that solve more instances a fair advantage, and ensures that time limited runs do not
  contribute to the time and node counts.
  
  One noteworthy difference between our experiments and those of Le Bodic and Nemhauser~\cite{LeBodic2017} is that other than the change in branching rules, we allow SCIP to use its default settings. This contrasts with the experiments of Le Bodic and Nemhauser, which provided problems with their primal bound, disabled primal heuristics, disabled cuts after the root node, and disabled the connected components presolver. Although these changes may lead to reduced variability \cite{lodi2013performance}, such setups often do not reflect the true performance of the solver on real world instances with real settings \cite{alvarez2017machine}. Additionally, the MIPLIB 2017 Benchmark Set has been designed to include problems that exhibit strong numerical stability, and hence less variability, for this reason.
  
  The summary results are depicted in Table~\ref{tab:rubberband_all}. The results are divided into three major columns, representing respectively the \texttt{product} (SCIP's default), \texttt{ratio}, and \texttt{svts} scoring functions. The three subcolumns of \texttt{product} show the number of instances that were solved, the time taken, and the search tree size respectively. For \texttt{ratio} and \texttt{svts}, these are measured relative to product. We report summaries in terms of the totals, the geometric means, and shifted geometric means (with shifts of $10$ and $100$ for time and nodes respectively, as is standard for MIP benchmarking \cite{achterberg2007thesis}). The best performing rule for each measurement is shown in bold. A table with per-instance statistics can be found in Appendix 3.
  
  \setlength{\tabcolsep}{0pt}
  
  \tablehead{
    \toprule
    Instance &  \multicolumn{3}{l}{\longunderline{\texttt{product}}} & \multicolumn{3}{l}{\longunderline{\texttt{ratio}}} & \multicolumn{3}{l}{\longunderline{\texttt{svts}}} \\
    &\# & Time & Nodes&\# & Time & Nodes&\# & Time & Nodes\\
    \midrule
  }
  \tabletail{
    \midrule
    \multicolumn{10}{r} \; continues on next page... \\
    \bottomrule
  }
  \tablelasttail{\bottomrule}
  
  \begin{table}[H]
    \centering
    {\small 
      \begin{supertabular*}{\linewidth}{@{\extracolsep{\fill}}l l l l l l l l l l}
        Total & 305 & 296.61k & 135.63m & +1 & 1.04 & 0.88 & \textbf{+3} & \textbf{0.96} & \textbf{0.82} \\
        Geo. mean & & 319.07 & \textbf{2.75k} &  & 1.01 & 1.08 &  & \textbf{0.99} & 1.02 \\
        Sh. geo. mean & &351.50 & \textbf{4.84k} &  & 1.02 & 1.05 &  & \textbf{0.99} & 1.00 \\
      \end{supertabular*}
    }
    \caption{Performance results on all instances of the MIPLIB 2017 Benchmark Set}\label{tab:rubberband_all}
  \end{table}
  
  \noindent Table~\ref{tab:rubberband_all} shows that both \texttt{ratio} and \texttt{svts} solve more instances than \texttt{product}, and require fewer nodes in total. However, the speedups are relatively small, with only a $1\%$ geometric average speedup for \texttt{svts}, and a $1\%-2\%$ slowdown for \texttt{ratio}. The larger improvements in arithmetic averages (i.e.\ totals) than geometric averages allude to the fact that the methods perform particularly well on instances requiring large B\&B trees. Since these scoring functions are indeed designed to work well on instances requiring large trees, we restrict our attention to the subset of instances for which at least one setting required at least $10,000$ and $50,000$ nodes. The summary results for these subsets of instances are shown in Tables \ref{tab:rubberband_somewhat_large} and \ref{tab:rubberband_large} respectively.
  
  \begin{table}[h]
    \centering
    {\small
      \begin{supertabular*}{\linewidth}{@{\extracolsep{\fill}}l l l l l l l l l l}
        Total & 141 & 216.35k & 135.49m & +4 & 1.01 & 0.88 & \textbf{+5} & \textbf{0.92} & \textbf{0.82} \\
        Geo. mean & & 858.30 & 92.19k &  & 0.99 & 1.04 &  & \textbf{0.93} & \textbf{0.94} \\
        Sh. geo. mean & &882.15 & 92.93k &  & 0.99 & 1.04 &  & \textbf{0.93} & \textbf{0.94} \\
      \end{supertabular*}
    }
    \caption{Performance results on instances of the MIPLIB 2017 Benchmark Set that required at least $10$k nodes for some setting}\label{tab:rubberband_somewhat_large}
  \end{table}
  
  \begin{table}[h]
    \centering
    {\small
      \begin{supertabular*}{\linewidth}{@{\extracolsep{\fill}}l l l l l l l l l l}
        Total & 93 & 144.30k & 134.87m & +4 & 1.00 & 0.87 & \textbf{+5} & \textbf{0.86} & \textbf{0.81} \\
        Geo. mean & & 1.02k & 326.96k &  & 0.96 & 1.02 &  & \textbf{0.89} & \textbf{0.89} \\
        Sh. geo. mean & &1.03k & 327.19k &  & 0.96 & 1.02 &  & \textbf{0.89} & \textbf{0.89} \\
      \end{supertabular*}
    }
    \caption{Performance results on instances of the MIPLIB 2017 Benchmark Set that required at least $50$k nodes for some setting}\label{tab:rubberband_large}
  \end{table}
  
  
On instances requiring at least $10,000$ nodes, \texttt{svts} achieves a $7\%$ and $6\%$ geometric average speedup and tree size reduction respectively, and solves more instances than \texttt{product}. On instances requiring $50,000$ nodes, this improves to $11\%$.
 The \texttt{ratio} scoring function performs less well, only outperforming product by $4\%$ in geometric average time, and not yielding smaller trees on average. These results confirm that \texttt{svts} is significantly better than \texttt{product} for MIPs that require very large B\&B trees.

	\section{Conclusions}
	
	In this paper, we resolved many of the open problems of Le Bodic and Nemhauser's B\&B model. We showed that there is no closed-form formula for the ratio value $\varphi$, and that the MVB conjecture is false. Additionally, we showed tighter hardness results for the GVB problem, and showed that the GVB simulation problem can be solved in expected sub-exponential time in $n$ for scoring functions that respect dominance. We then implemented improved branching rules for the MIP solver SCIP 6.0, which yielded an $11\%$ geometric average speedup and tree size reduction for problems in the MIPLIB 2017 Benchmark Set that required large B\&B trees.
  Since these rules perform well on MIP instances that lead to large B\&B trees, an interesting line of future work would be to incorporate the methods of Anderson et al.\ \cite{anderson2019clairvoyant} for predicting B\&B tree sizes to select branching rules at run time, or to reconsider the choice of branching rule after performing a restart.
  
  {\bigskip\noindent\textbf{Acknowledgments } This research was funded by a Monash Faculty of IT grant. We would like to thank Professor Graham Farr for introducing the authors to one another, hence without whom this research may have never happened. We also thank Gregor Hendel and the SCIP team for assistance with the performance tests. Finally, we are indebted to the referees for their thorough and helpful input, thanks to which the quality of this manuscript was substantially improved.}
  
	{
    \clearpage
  	\bibliographystyle{spmpsci}
  	\bibliography{ref}

\begin{thebibliography}{10}
\providecommand{\url}[1]{{#1}}
\providecommand{\urlprefix}{URL }
\expandafter\ifx\csname urlstyle\endcsname\relax
  \providecommand{\doi}[1]{DOI~\discretionary{}{}{}#1}\else
  \providecommand{\doi}{DOI~\discretionary{}{}{}\begingroup
  \urlstyle{rm}\Url}\fi

\bibitem{achterberg2007thesis}
Achterberg, T.: Constraint integer programming.
\newblock Ph.D. thesis, Technische Universit\"at Berlin (2007)

\bibitem{achterberg2009scip}
Achterberg, T.: Scip: solving constraint integer programs.
\newblock Mathematical Programming Computation \textbf{1}(1), 1--41 (2009)

\bibitem{achterberg2009hybrid}
Achterberg, T., Berthold, T.: Hybrid branching.
\newblock In: International Conference on AI and OR techniques in constraint
  programming for combinatorial optimization problems (CPAIOR) (2009)

\bibitem{achterberg2005a}
Achterberg, T., Koch, T., Martin, A.: Branching rules revisited.
\newblock Operations Research Letters \textbf{33}(1), 42 -- 54 (2005).
\newblock \doi{https://doi.org/10.1016/j.orl.2004.04.002}.
\newblock
  \urlprefix\url{http://www.sciencedirect.com/science/article/pii/S0167637704000501}

\bibitem{acton1990numerical}
Acton, F.S.: Numerical methods that work.
\newblock Mathematical Association of America (1990)

\bibitem{alvarez2017machine}
Alvarez, A.M., Louveaux, Q., Wehenkel, L.: A machine learning-based
  approximation of strong branching.
\newblock INFORMS Journal on Computing \textbf{29}(1), 185--195 (2017)

\bibitem{anderson2019clairvoyant}
Anderson, D., Hendel, G., Le~Bodic, P., Viernickel, M.: Clairvoyant restarts in
  branch-and-bound search using online tree-size estimation.
\newblock In: AAAI Conference on Artificial Intelligence (2019)

\bibitem{applegate1995finding}
Applegate, D., Bixby, R., Chvatal, V., Cook, B.: Finding cuts in the tsp (a
  preliminary report).
\newblock Tech. rep., Center for Discrete Mathematics \& Theoretical Computer
  Science (1995)

\bibitem{balcan2018learning}
Balcan, M.F., Dick, T., Sandholm, T., Vitercik, E.: Learning to branch.
\newblock In: International Conference on Machine Learning (ICML) (2018)

\bibitem{benichou1971experiments}
B{\'e}nichou, M., Gauthier, J.M., Girodet, P., Hentges, G., Ribi{\`e}re, G.,
  Vincent, O.: Experiments in mixed-integer linear programming.
\newblock Mathematical Programming \textbf{1}(1), 76--94 (1971)

\bibitem{gamrath2020a}
Gamrath, G., Anderson, D., Bestuzheva, K., Chen, W.K., Eifler, L., Gasse, M.,
  Gemander, P., Gleixner, A., Gottwald, L., Halbig, K., Hendel, G., Hojny, C.,
  Koch, T., Le~Bodic, P., Maher, S.J., Matter, F., Miltenberger, M.,
  M{\"u}hmer, E., M{\"u}ller, B., Pfetsch, M., Schl{\"o}sser, F., Serrano, F.,
  Shinano, Y., Tawfik, C., Vigerske, S., Wegscheider, F., Weninger, D., Witzig,
  J.: The scip optimization suite 7.0.
\newblock Tech. Rep. 20-10, ZIB, Takustr. 7, 14195 Berlin (2020)

\bibitem{garey2002computers}
Garey, M.R., Johnson, D.S.: Computers and intractability.
\newblock W. H. Freeman \& Co. (2002)

\bibitem{GleixnerEtal2018OO}
Gleixner, A., Bastubbe, M., Eifler, L., Gally, T., Gamrath, G., Gottwald, R.L.,
  Hendel, G., Hojny, C., Koch, T., L{\"u}bbecke, M.E., Maher, S.J.,
  Miltenberger, M., M{\"u}ller, B., Pfetsch, M.E., Puchert, C., Rehfeldt, D.,
  Schl{\"o}sser, F., Schubert, C., Serrano, F., Shinano, Y., Viernickel, J.M.,
  Walter, M., Wegscheider, F., Witt, J.T., Witzig, J.: {The SCIP Optimization
  Suite 6.0}.
\newblock Technical report, Optimization Online (2018)

\bibitem{haase2016complexity}
Haase, C., Kiefer, S.: The complexity of the {K}th largest subset problem and
  related problems.
\newblock Information Processing Letters \textbf{116}(2), 111--115 (2016)

\bibitem{homer2011computability}
Homer, S., Selman, A.L.: Computability and complexity theory.
\newblock Springer (2011)

\bibitem{khalil2016learning}
Khalil, E.B., Le~Bodic, P., Song, L., Nemhauser, G.L., Dilkina, B.N.: Learning
  to branch in mixed integer programming.
\newblock In: AAAI Conference on Artificial Intelligence (2016)

\bibitem{kullmann2009fundaments}
Kullmann, O.: Fundaments of branching heuristics.
\newblock Handbook of Satisfiability \textbf{185}, 205--244 (2009)

\bibitem{land1960automatic}
Land, A.H., Doig, A.G.: An automatic method of solving discrete programming
  problems.
\newblock Econometrica: Journal of the Econometric Society pp. 497--520 (1960)

\bibitem{LeBodic2017}
Le~Bodic, P., Nemhauser, G.: An abstract model for branching and its
  application to mixed integer programming.
\newblock Mathematical Programming \textbf{166}(1-2), 369--405 (2017)

\bibitem{ljunggren1960irreducibility}
Ljunggren, W.: On the irreducibility of certain trinomials and quadrinomials.
\newblock Mathematica Scandinavica \textbf{8}(1), 65--70 (1960)

\bibitem{lodi2013performance}
Lodi, A., Tramontani, A.: Performance Variability in Mixed-Integer Programming,
  chap. Chapter 1, pp. 1--12.
\newblock INFORMS (2013).
\newblock \doi{10.1287/educ.2013.0112}.
\newblock
  \urlprefix\url{https://pubsonline.informs.org/doi/abs/10.1287/educ.2013.0112}

\bibitem{miplib2017}
{MIPLIB} 2017 (2018).
\newblock \urlprefix\url{http://miplib.zib.de}

\bibitem{osada1987galois}
Osada, H.: The {G}alois groups of the polynomials ${{X}^n + a{X}^l + b}$.
\newblock Journal of number theory \textbf{25}(2), 230--238 (1987)

\bibitem{owen2001disjunctive}
Owen, J.H., Mehrotra, S.: A disjunctive cutting plane procedure for general
  mixed-integer linear programs.
\newblock Mathematical programming \textbf{89}(3), 437--448 (2001)

\end{thebibliography}
  }

  \clearpage
  
  \section*{Appendix 1: Solution to the MVB counterexample}
  
  We prove the correctness of \eqref{eqn:closed_form_mvb} by induction. For $G \leq 5$, we can confirm exhaustively that $t(0) = 1,\ t(1) = 3,\ t(2) = 3,\ t(3) = 3,\ t(4) = 5,\ t(5) = 7$. Then, for $G \geq 6$, suppose that \eqref{eqn:closed_form_mvb} is a solution to the instance. We have
  
  \begin{multicols}{2}
    \small
    \begin{flalign*}
    &t(0 + 6k) \\
    &= 1 + \min \begin{cases}
    t(0 + 6k - 2) + t(0 + 6k - 4), \\
    t(0 + 6k - 3) + t(0 + 6k - 3), \\
    \end{cases}\\
    &= 1 + \min \begin{cases}
    t(4 + 6(k-1)) + t(2 + 6(k-1)),\\
    t(3 + 6(k-1)) + t(3 + 6(k-1)),\\
    \end{cases}\\
    &= 1 + \min \begin{cases}
    \frac{2}{3}\left( 2 \cdot 4^k + 1 \right) - 1 + \frac{1}{6}\left( 5 \cdot 4^k + 4 \right) - 1,\\
    4^k - 1 + 4^k - 1,\\
    \end{cases}\\
    &= \min \begin{cases}
    \frac{1}{6}\left( 13 \cdot 4^k + 8 \right) - 1,\\
    2 \cdot 4^k - 1,\\
    \end{cases}\\
    &= 2 \cdot 4^k - 1.
    \end{flalign*}
    
    \begin{flalign*}
    &t(1 + 6k) \\
    &= 1 + \min \begin{cases}
    t(1 + 6k - 2) + t(1 + 6k - 4), \\
    t(1 + 6k - 3) + t(1 + 6k - 3), \\
    \end{cases}\\
    &= 1 + \min \begin{cases}
    t(5 + 6(k-1)) + t(3 + 6(k-1)),\\
    t(4 + 6(k-1)) + t(4 + 6(k-1)),\\
    \end{cases}\\
    &= 1 + \min \begin{cases}
    \frac{1}{3}\left( 5 \cdot 4^k + 4 \right) - 1 + 4^k - 1,\\
    \frac{2}{3}\left( 2 \cdot 4^k + 1 \right) - 1 + \frac{2}{3}\left( 2 \cdot 4^k + 1 \right) - 1,\\
    \end{cases}\\
    &= \min \begin{cases}
    \frac{4}{3}\left( 2 \cdot 4^k + 1 \right) - 1,\\
    \frac{4}{3}\left( 2 \cdot 4^k + 1 \right) - 1,\\
    \end{cases}\\
    &= \frac{4}{3}\left( 2 \cdot 4^k + 1 \right) - 1.
    \end{flalign*}
    \begin{flalign*}
    &t(2 + 6k) \\
    &= 1 + \min \begin{cases}
    t(2 + 6k - 2) + t(2 + 6k - 4), \\
    t(2 + 6k - 3) + t(2 + 6k - 3), \\
    \end{cases}\\
    &= 1 + \min \begin{cases}
    t(0 + 6k) + t(4 + 6(k-1)),\\
    t(5 + 6(k-1)) + t(5 + 6(k-1)),\\
    \end{cases}\\
    &= 1 + \min \begin{cases} \label{eqn:mvb_counterexample_case}
    2 \cdot 4^k - 1 + \frac{2}{3}\left( 2 \cdot 4^k + 1 \right) - 1,\\
    \frac{1}{3}\left( 5 \cdot 4^k + 4 \right) - 1 + \frac{1}{3}\left( 5 \cdot 4^k + 4 \right) - 1,\\
    \end{cases}\\
    &= \min \begin{cases} 
    \frac{2}{3}\left( 5 \cdot 4^k + 1 \right) - 1,\\
    \frac{2}{3}\left( 5 \cdot 4^k + 4 \right) - 1,\\
    \end{cases}\\
    &= \frac{2}{3}\left( 5 \cdot 4^k + 1 \right) - 1.
    \end{flalign*}
    \begin{flalign*}
    &t(3 + 6k) \\
    &= 1 + \min \begin{cases}
    t(3 + 6k - 2) + t(3 + 6k - 4), \\
    t(3 + 6k - 3) + t(3 + 6k - 3), \\
    \end{cases}\\
    &= 1 + \min \begin{cases}
    t(1 + 6k) + t(5 + 6(k-1)),\\
    t(0 + 6k) + t(0 + 6k),\\
    \end{cases}\\
    &= 1 + \min \begin{cases}
    \frac{4}{3}\left( 2 \cdot 4^k + 1 \right) - 1 + \frac{1}{3}\left( 5 \cdot 4^k + 4 \right) - 1,\\
    2 \cdot 4^k - 1 + 2 \cdot 4^k - 1,\\
    \end{cases}\\
    &= \min \begin{cases}
    \frac{1}{3}\left( 13 \cdot 4^k + 8 \right) - 1,\\
    4 \cdot 4^k - 1,\\
    \end{cases}\\
    &= 4 \cdot 4^k - 1.
    \end{flalign*}
    \begin{flalign*}
    &t(4 + 6k) \\
    &= 1 + \min \begin{cases}
    t(4 + 6k - 2) + t(4 + 6k - 4), \\
    t(4 + 6k - 3) + t(4 + 6k - 3), \\
    \end{cases}\\
    &= 1 + \min \begin{cases}
    t(2 + 6k) + t(0 + 6k),\\
    t(1 + 6k) + t(1 + 6k),\\
    \end{cases}\\
    &= 1 + \min \begin{cases}
    \frac{2}{3}\left( 5 \cdot 4^k + 1 \right) - 1 + 2 \cdot 4^k - 1,\\
    \frac{4}{3}\left( 2 \cdot 4^k + 1 \right) - 1 + \frac{4}{3}\left( 2 \cdot 4^k + 1 \right) - 1,\\
    \end{cases}\\
    &= \min \begin{cases}
    \frac{2}{3}\left( 8 \cdot 4^k + 1 \right) - 1,\\
    \frac{8}{3}\left( 2 \cdot 4^k + 1 \right) - 1,\\
    \end{cases}\\
    &= \frac{2}{3}\left( 8 \cdot 4^k + 1 \right) - 1.
    \end{flalign*}
    \begin{flalign*}
    &t(5 + 6k) \\
    &= 1 + \min \begin{cases}
    t(5 + 6k - 2) + t(5 + 6k - 4), \\
    t(5 + 6k - 3) + t(5 + 6k - 3), \\
    \end{cases}\\
    &= 1 + \min \begin{cases}
    t(3 + 6k) + t(1 + 6k),\\
    t(2 + 6k) + t(2 + 6k),\\
    \end{cases}\\
    &= 1 + \min \begin{cases}
    4 \cdot 4^k - 1 + \frac{4}{3}\left( 2 \cdot 4^k + 1 \right) - 1,\\
    \frac{2}{3}\left( 5 \cdot 4^k + 1 \right) - 1 + \frac{2}{3}\left( 5 \cdot 4^k + 1 \right) - 1,\\
    \end{cases}\\
    &= \min \begin{cases}
    \frac{4}{3}\left( 5 \cdot 4^k + 1 \right) - 1,\\
    \frac{4}{3}\left( 5 \cdot 4^k + 1 \right) - 1,\\
    \end{cases}\\
    &= \frac{4}{3}\left( 5 \cdot 4^k + 1 \right) - 1.
    \end{flalign*}      
  \end{multicols}
  
  \noindent Therefore by induction on $G$, we can conclude that \eqref{eqn:closed_form_mvb} is a solution to the instance.
  
  \section*{Appendix 2: Evaluating scoring functions using the GVB simulation problem}\label{sec:preliminary_comp}
  
  We demonstrate the utility of the improved analysis of the GVB simulation problem as a tool for calibrating and predicting the performance of proposed variable selection rules.
  
  \subsection*{A practical algorithm for the GVB simulation problem}
  
  Using the fact that the expected number of non-dominated subsets of a set of random variables is sub-exponential, a dynamic programming algorithm that, at each step, filtered out the dominated variables and only generated states implicitly could solve \eqref{eqn:gvb_with_selection_rule} in expected sub-exponential time and space. We present here some techniques that lead to an even more practical algorithm.
  
  \begin{enumerate}[leftmargin=*]
    \item We first note that dominance is clearly transitive by definition, and that if $v_1$ dominates $v_2$, then $v_2$ can not dominate $v_1$. The dominance relation therefore defines a directed acyclic graph (DAG) on the variables. Note that the order that a non-dominating variable selection rule chooses to branch on a set of variables must therefore be a topological sort of the given DAG. For random data, the dominance DAG can be very dense, so, for efficiency, our algorithm computes the transitive reduction of the dominance DAG. The transitive reduction of the dominance DAG contains for each vertex, edges to those visible on the upper convex hull from the point of view of that vertex, hence the expected outdegree will be $O(\log(n))$. See Figure \ref{fig:reduced_dominance_dag}.
    
    \item The dynamic programming states are indexed by the current subset of available non-dominated variables and the current gap. In order to speed up indexing the states, the algorithm first pre-computes the set of all non-dominated subsets using heuristics (1) and (3) and a depth-first search through the state space.
    
    \item When a variable is used, the algorithm examines its successors in the reduced dominance DAG. For each successor, if the selected variable was its last dominator, that variable is no longer dominated and is added to the subset of available non-dominated variables. This can be maintained in constant time by keeping track of the current indegree for each variable ({\`a} la Kahn's topological sorting algorithm). Alternatively, if the multiplicities are all one and we use fewer than $w$ variables, where $w$ is the size of a machine word, we can pre-compute the adjacency matrix of the reverse dominance DAG and store the current subset as a bitmask. Checking whether a variable is the last remaining dominator is then achieved in one operation with a bitwise AND.
  \end{enumerate}
  
  \begin{figure}[h]
    \centering
    \includegraphics[scale=0.35]{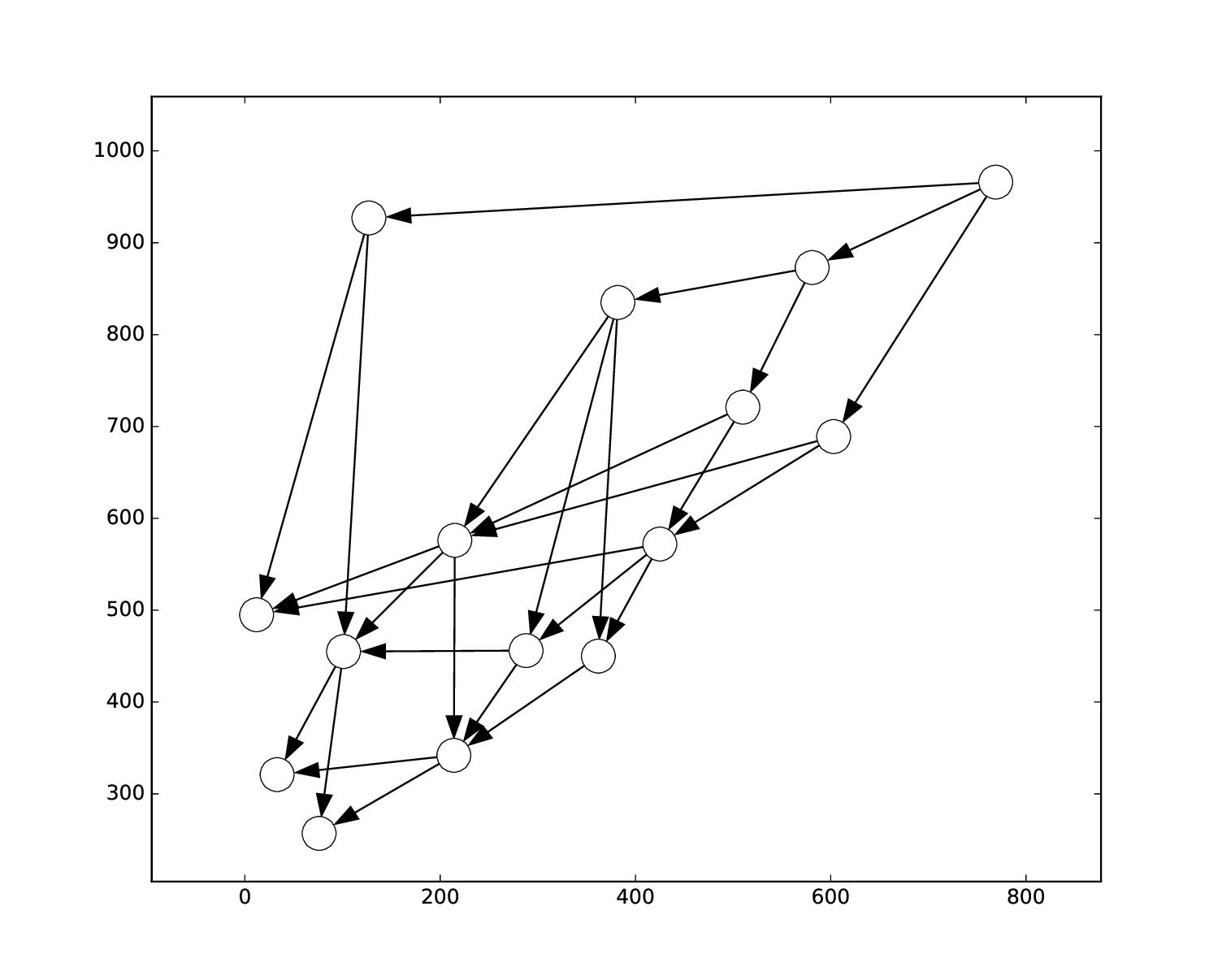}
    \caption{The transitive reduction of the dominance graph of 15 random variables in $[0,1000]^2$.}\label{fig:reduced_dominance_dag}
  \end{figure}
  
  \subsection*{Test parameters}
  
  For each test case, the algorithm generates $60$ random variables $(l,r)$. The test cases each fall into one of four categories:
  \begin{itemize}[leftmargin=12pt]
    \item \textbf{Balanced Instances}: The gains $l$ and $r$ are generated uniformly from the range $[1,1000]$. If $l > r$, then the two are switched to ensure $l \leq r$.
    \item \textbf{Unbalanced Instances}: The gains $l$ and $r$ are generated uniformly from the ranges $[1,500]$ and $[501,1000]$ respectively.
    \item \textbf{Very Unbalanced Instances}: The gains $l$ and $r$ are generated uniformly from the ranges $[1,250]$ and $[251,1000]$ respectively.
    \item \textbf{Extremely Unbalanced Instances}: The gains $l$ and $r$ are generated uniformly from the ranges $[1,125]$ and $[126,1000]$ respectively.
  \end{itemize}
  
  \noindent For simplicity, all variables have multiplicity $1$, so each leaf node of a resulting B\&B tree will have a depth at most $60$. We generate $3000$ test cases for each category, using three different values of $G$ to measure the effectiveness of the rules as the trees become larger. For the four categories above, we use the gap values $G$ depicted in Table \ref{tab:gvb_gaps}. Note that the gap decreases as the variables become less balanced since the trees will require an intractable number of nodes otherwise.
  
  \begin{table}[H]
    \centering
    \begingroup
    \def\arraystretch{1.1}
    \begin{tabular}{p{4.5cm} p{1.5cm} p{1.5cm} p{1.5cm}}
      \toprule
      Category & Small & Medium & Large \\
      \midrule
      Balanced & 5000 & 9000 & 12000 \\
      Unbalanced & 4000 & 7000 & 9000\\
      Very Unbalanced & 3000 & 5000 & 6000 \\
      Extremely Unbalanced & 2000 & 3000 & 3500 \\
      \bottomrule
    \end{tabular}
    \vspace{8pt}
    \caption{The gap sizes $G$ used in each of the computational tests.}\label{tab:gvb_gaps}
    \endgroup
  \end{table}
  
  \noindent The gaps in Table~\ref{tab:gvb_gaps} are chosen slightly differently to Le Bodic and Nemhauser \cite{LeBodic2017} in order to ensure that the simulations remain computationally feasible.
  
  \subsection*{GVB simulation experiments}
  
  Le Bodic and Nemhauser \cite{LeBodic2017} used the GVB simulation problem to tune the hybrid \texttt{ratio} rule, which selects from either the ratio or the product score depending on the estimated tree height. Due to the intractability of the GVB problem, they were unable to perform experiments on the \texttt{svts} rule. In this section, we use our improved GVB simulation algorithm to compare the \texttt{product} rule, the \texttt{ratio} rule and \texttt{svts}.
  
  Table~\ref{tab:hybrid_gvb_simulations} shows for each category and gap combination, for each scoring function, the estimated geometric average tree size relative to product (hence product shows all zeros).
  
  \begin{table*}[h!]
    \centering
    \begingroup
    \renewcommand*{\arraystretch}{1.1}
    \begin{tabular}{p{4.5cm} p{1.5cm} p{1.6cm} p{1.5cm} p{1.5cm} }
      \toprule
      Category & Gap & \texttt{product} & {\texttt{ratio}} & \texttt{svts} \\
      \midrule
      {Balanced} & 5000  & 0.00 &	0.00 &	\textbf{-5.69}	\\
      & 9000  & 0.00 &	0.00 &	\textbf{-8.87}  \\
      & 12000  & 0.00 &	0.00 &	\textbf{-9.54} \\
      \midrule
      {Unbalanced} & 4000  &  0.00 &	0.00 &	\textbf{-7.05} \\
      & 7000  & 0.00	& -0.00 &	\textbf{-9.14}  \\
      & 9000  &  0.00 &	-0.01 &	\textbf{-9.89}  \\
      \midrule
      {Very \mbox{unbalanced}} & 3000  &  0.00 &	-0.09 &	\textbf{-10.98} \\
      & 5000  & 0.00 &	-1.17 &	\textbf{-14.12} \\
      & 6000  & 0.00 &	-2.47 &	\textbf{-15.44}  \\
      \midrule
      {Extremely \mbox{unbalanced}} & 2000 &  0.00 &	-2.89 &	\textbf{-13.63}	 \\
      & 3000  & 0.00 &	-8.42 &	\textbf{-18.38} \\
      & 3500  & 0.00 &	-11.12 &	\textbf{-20.68} \\
      \bottomrule
    \end{tabular}
    \vspace{8pt}
    \caption{Results of the GVB simulation problem. The best performing rule for each category and gap combination is shown in bold. The \texttt{ratio} column refers to the hybrid \texttt{ratio-product} rule.}\label{tab:hybrid_gvb_simulations}
    \endgroup
  \end{table*}
  
  \noindent The results depicted in Table~\ref{tab:hybrid_gvb_simulations} clearly predict that the \texttt{svts} scoring function will outperform the hybrid \texttt{ratio} function, yielding tree size reductions between $5\%$ and $20\%$. This backs up the results of our MIP benchmarks, in which \texttt{svts} does indeed outperform \texttt{ratio} on average.
  
  \section*{Appendix 3: Full performance test results}
  
  Here we present complete data on the performance tests of the MIPLIB 2017 Benchmark Set. For each row (i.e.\ benchmark problem), we report the geometric average time and tree size of the best $N$ solved instances for each scoring function, where $N$ is the minimum number of instances of that problem solved by any scoring function. When a rule fails to solve any instance of a problem, time and node comparisons for that row are omitted, but number of instances solved is still depicted. As in the summary results, measurements for \texttt{ratio} and \texttt{svts} are given relative to \texttt{product}.
  
  \tablehead{
    \toprule
    Instance &  \multicolumn{3}{l}{\longunderline{\texttt{product}}} & \multicolumn{3}{l}{\longunderline{\texttt{ratio}}} & \multicolumn{3}{l}{\longunderline{\texttt{svts}}} \\
    &\# & Time & Nodes&\# & Time & Nodes&\# & Time & Nodes\\
    \midrule
  }
  \tabletail{
    \midrule
    \multicolumn{10}{r} \; continues on next page... \\
    \bottomrule
  }
  
  
  {
    \bigskip
    \filbreak 
    \small
    \bottomcaption{Detailed performance results on all instances of the MIPLIB 2017 Benchmark Set. The best performing rule for each problem is shown in bold.}\label{tab:fulll_all_results}
    \begin{supertabular*}{\linewidth}{@{\extracolsep{\fill}}l l l l l l l l l l}
        30n20b8 &\textbf{3} & 281.40 & 138.00 & \textbf{+0} & 0.92 & 1.26 & \textbf{+0} & \textbf{0.70} & \textbf{0.56} \\
        CMS750\_4 &\textbf{3} & \textbf{857.93} & \textbf{11.91k} & \textbf{+0} & 1.25 & 1.03 & \textbf{+0} & 2.15 & 2.06 \\
        air05 &\textbf{3} & \textbf{30.10} & \textbf{333.67} & \textbf{+0} & 1.20 & 1.91 & \textbf{+0} & 1.17 & 1.19 \\
        app1-1 &\textbf{3} & 5.97 & \textbf{4.00} & \textbf{+0} & 1.01 & \textbf{1.00} & \textbf{+0} & \textbf{0.99} & \textbf{1.00} \\
        app1-2 &\textbf{3} & 1.04k & 68.33 & \textbf{+0} & \textbf{0.83} & \textbf{0.62} & \textbf{+0} & 1.28 & 0.85 \\
        assign1-5-8 &\textbf{3} & 3.35k & 5.94m & \textbf{+0} & 0.96 & 0.93 & \textbf{+0} & \textbf{0.85} & \textbf{0.83} \\
        beasleyC3 &\textbf{3} & 22.57 & \textbf{2.00} & \textbf{+0} & 1.00 & \textbf{1.00} & \textbf{+0} & \textbf{1.00} & \textbf{1.00} \\
        binkar10\_1 &\textbf{3} & 28.47 & 2.58k & \textbf{+0} & \textbf{0.79} & 0.88 & \textbf{+0} & 0.83 & \textbf{0.86} \\
        bnatt400 &\textbf{3} & 1.18k & \textbf{6.85k} & \textbf{+0} & 0.96 & 1.07 & \textbf{+0} & \textbf{0.96} & 1.07 \\
        bnatt500 &\textbf{3} & 5.02k & 30.07k & \textbf{+0} & 0.97 & \textbf{0.96} & \textbf{+0} & \textbf{0.97} & \textbf{0.96} \\
        bppc4-08 &0 & - & - & \textbf{+2} & - & - & \textbf{+2} & - & - \\
        brazil3 &\textbf{3} & 4.01k & \textbf{1.69k} & \textbf{+0} & \textbf{1.00} & 1.47 & \textbf{+0} & 1.04 & 1.39 \\
        chromaticindex512-7 &\textbf{3} & \textbf{2.21k} & \textbf{5.89k} & \textbf{+0} & 1.01 & 1.24 & \textbf{+0} & 1.01 & 1.24 \\
        co-100 &\textbf{2} & 4.53k & \textbf{6.11k} & \textbf{+0} & \textbf{0.93} & 1.64 & \textbf{+0} & 1.09 & 2.01 \\
        cod105 &\textbf{3} & \textbf{315.37} & \textbf{105.00} & \textbf{+0} & 1.16 & 2.05 & \textbf{+0} & 1.18 & 2.04 \\
        cost266-UUE &\textbf{3} & 3.18k & 234.69k & \textbf{+0} & 1.34 & 1.46 & \textbf{+0} & \textbf{0.96} & \textbf{0.95} \\
        csched007 &\textbf{3} & 3.11k & 266.24k & \textbf{+0} & \textbf{0.53} & \textbf{0.51} & \textbf{+0} & 0.71 & 0.57 \\
        csched008 &\textbf{3} & \textbf{1.03k} & \textbf{94.19k} & \textbf{+0} & 1.27 & 1.24 & \textbf{+0} & 1.36 & 1.42 \\
        dano3\_3 &\textbf{3} & \textbf{106.70} & \textbf{12.33} & \textbf{+0} & 1.10 & 1.68 & \textbf{+0} & 1.02 & 1.41 \\
        dano3\_5 &\textbf{3} & 307.80 & 165.00 & \textbf{+0} & \textbf{0.86} & \textbf{0.89} & \textbf{+0} & 0.89 & 1.04 \\
        drayage-100-23 &\textbf{3} & 16.83 & 33.33 & \textbf{+0} & 0.95 & 1.03 & \textbf{+0} & \textbf{0.89} & \textbf{0.90} \\
        drayage-25-23 &\textbf{3} & 1.29k & \textbf{106.45k} & -2 & 4.99 & 18.14 & -2 & \textbf{0.97} & 2.67 \\
        eil33-2 &\textbf{3} & 70.93 & 797.00 & \textbf{+0} & \textbf{0.89} & \textbf{0.91} & \textbf{+0} & 0.97 & 0.97 \\
        fast0507 &\textbf{3} & \textbf{217.13} & \textbf{840.00} & \textbf{+0} & 1.44 & 1.54 & \textbf{+0} & 1.20 & 1.15 \\
        fastxgemm-n2r6s0t2 &\textbf{3} & 605.93 & 113.70k & \textbf{+0} & 1.35 & 1.42 & \textbf{+0} & \textbf{0.76} & \textbf{0.73} \\
        fiball &1 & 1.47k & 3.56k & \textbf{+1} & \textbf{0.89} & \textbf{0.80} & \textbf{+1} & 1.17 & 1.16 \\
        gen-ip002 &\textbf{3} & 1.55k & 5.59m & \textbf{+0} & \textbf{0.91} & \textbf{0.93} & \textbf{+0} & 0.96 & 1.00 \\
        gen-ip054 &\textbf{3} & 3.04k & 12.77m & \textbf{+0} & \textbf{0.63} & \textbf{0.62} & \textbf{+0} & 0.63 & 0.63 \\
        glass-sc &\textbf{3} & 3.16k & 278.80k & \textbf{+0} & \textbf{0.93} & \textbf{0.96} & \textbf{+0} & 0.95 & 0.98 \\
        glass4 &\textbf{3} & \textbf{2.17k} & \textbf{1.73m} & \textbf{+0} & 1.54 & 2.37 & \textbf{+0} & 1.14 & 1.63 \\
        graph20-20-1rand &2 & 5.17k & 38.68k & +0 & \textbf{0.34} & \textbf{0.37} & \textbf{+1} & 0.49 & 0.49 \\
        graphdraw-domain &\textbf{3} & \textbf{1.39k} & \textbf{2.52m} & \textbf{+0} & 1.15 & 1.11 & \textbf{+0} & 1.03 & 1.01 \\
        h80x6320d &\textbf{3} & \textbf{105.67} & \textbf{4.00} & \textbf{+0} & 1.00 & \textbf{1.00} & \textbf{+0} & 1.00 & \textbf{1.00} \\
        icir97\_tension &0 & - & - & \textbf{+3} & - & - & +1 & - & - \\
        irp &\textbf{3} & \textbf{12.90} & \textbf{5.33} & \textbf{+0} & 1.01 & \textbf{1.00} & \textbf{+0} & 1.00 & \textbf{1.00} \\
        istanbul-no-cutoff &\textbf{3} & 179.57 & 302.33 & \textbf{+0} & \textbf{0.95} & 1.03 & \textbf{+0} & 1.02 & \textbf{0.99} \\
        map10 &\textbf{3} & \textbf{775.90} & \textbf{1.15k} & \textbf{+0} & 1.01 & 1.32 & \textbf{+0} & 1.11 & 1.43 \\
        map16715-04 &\textbf{3} & \textbf{1.75k} & \textbf{1.73k} & \textbf{+0} & 1.17 & 1.35 & \textbf{+0} & 1.21 & 1.31 \\
        markshare\_4\_0 &\textbf{3} & 320.37 & 2.51m & \textbf{+0} & \textbf{0.78} & \textbf{0.79} & \textbf{+0} & 0.87 & 0.88 \\
        mas74 &\textbf{3} & 2.29k & 7.07m & \textbf{+0} & 0.89 & 0.84 & \textbf{+0} & \textbf{0.77} & \textbf{0.78} \\
        mas76 &\textbf{3} & 132.87 & 301.16k & \textbf{+0} & 0.98 & 1.02 & \textbf{+0} & \textbf{0.90} & \textbf{0.88} \\
        mc11 &\textbf{3} & \textbf{112.33} & 2.26k & \textbf{+0} & 1.01 & \textbf{0.95} & \textbf{+0} & 1.55 & 2.12 \\
        mcsched &\textbf{3} & 240.10 & \textbf{11.39k} & \textbf{+0} & \textbf{0.99} & 1.31 & \textbf{+0} & 1.00 & 1.08 \\
        mik-250-20-75-4 &\textbf{3} & 37.23 & 16.05k & \textbf{+0} & \textbf{0.74} & \textbf{0.73} & \textbf{+0} & 0.79 & 0.76 \\
        mzzv11 &\textbf{3} & 336.90 & 1.57k & \textbf{+0} & \textbf{0.90} & 0.98 & \textbf{+0} & 0.94 & \textbf{0.80} \\
        mzzv42z &\textbf{3} & \textbf{168.13} & \textbf{259.67} & \textbf{+0} & 1.26 & 1.22 & \textbf{+0} & 1.19 & 1.34 \\
        n2seq36q &\textbf{3} & 921.67 & 4.12k & \textbf{+0} & \textbf{0.94} & \textbf{0.52} & \textbf{+0} & 0.95 & 0.86 \\
        n5-3 &\textbf{3} & \textbf{30.50} & \textbf{785.33} & \textbf{+0} & 1.07 & 1.17 & \textbf{+0} & 1.06 & 1.22 \\
        neos-1445765 &\textbf{3} & \textbf{59.53} & \textbf{72.33} & \textbf{+0} & 1.03 & 1.29 & \textbf{+0} & 1.01 & \textbf{1.00} \\
        neos-1456979 &\textbf{1} & - & - & -1 & - & - & -1 & - & - \\
        neos-1582420 &\textbf{3} & 37.30 & \textbf{664.33} & \textbf{+0} & \textbf{0.99} & 1.02 & \textbf{+0} & 1.21 & 1.67 \\
        neos-2657525-crna &0 & - & - & +0 & - & - & \textbf{+1} & - & - \\
        neos-3004026-krka &\textbf{3} & 68.13 & 5.25k & \textbf{+0} & \textbf{0.82} & \textbf{0.66} & \textbf{+0} & 0.82 & \textbf{0.66} \\
        neos-3024952-loue &2 & 1.22k & 135.87k & \textbf{+1} & 1.25 & 0.92 & \textbf{+1} & \textbf{0.99} & \textbf{0.74} \\
        neos-3083819-nubu &\textbf{3} & 14.93 & 2.33k & \textbf{+0} & \textbf{0.86} & 1.01 & \textbf{+0} & 0.89 & \textbf{1.00} \\
        neos-3216931-puriri &\textbf{1} & - & - & -1 & - & - & -1 & - & - \\
        neos-3402294-bobin &\textbf{3} & 1.84k & 12.76k & \textbf{+0} & 1.02 & 1.44 & \textbf{+0} & \textbf{0.68} & \textbf{0.53} \\
        neos-3627168-kasai &0 & - & - & \textbf{+1} & - & - & \textbf{+1} & - & - \\
        neos-4413714-turia &\textbf{3} & 387.20 & \textbf{2.00} & \textbf{+0} & \textbf{0.99} & \textbf{1.00} & \textbf{+0} & 1.00 & \textbf{1.00} \\
        neos-4722843-widden &\textbf{3} & \textbf{1.60k} & \textbf{2.62k} & \textbf{+0} & 1.12 & 1.15 & \textbf{+0} & 1.01 & 1.01 \\
        neos-4738912-atrato &\textbf{3} & 1.02k & 103.05k & \textbf{+0} & \textbf{0.71} & \textbf{0.60} & \textbf{+0} & 0.87 & 0.82 \\
        neos-5107597-kakapo &\textbf{3} & \textbf{2.10k} & \textbf{682.60k} & -2 & 2.67 & 3.52 & -1 & 1.85 & 2.23 \\
        neos-5188808-nattai &\textbf{3} & 2.22k & 27.40k & \textbf{+0} & \textbf{0.83} & \textbf{0.69} & \textbf{+0} & 0.84 & 0.81 \\
        neos-5195221-niemur &\textbf{3} & \textbf{2.68k} & \textbf{21.97k} & \textbf{+0} & 1.10 & 1.09 & \textbf{+0} & 1.25 & 1.31 \\
        neos-848589 &\textbf{1} & - & - & \textbf{+0} & - & - & -1 & - & - \\
        neos-860300 &\textbf{3} & \textbf{16.73} & \textbf{2.00} & \textbf{+0} & \textbf{1.00} & \textbf{1.00} & \textbf{+0} & 1.01 & \textbf{1.00} \\
        neos-911970 &\textbf{2} & \textbf{1.12k} & \textbf{623.61k} & -1 & 2.55 & 2.37 & -1 & 2.32 & 1.27 \\
        neos-933966 &\textbf{3} & 1.87k & 582.00 & \textbf{+0} & 1.60 & \textbf{0.91} & \textbf{+0} & \textbf{0.93} & 1.03 \\
        neos-950242 &\textbf{3} & 283.60 & 71.33 & \textbf{+0} & \textbf{0.82} & \textbf{0.72} & \textbf{+0} & 0.85 & 0.74 \\
        neos-960392 &\textbf{3} & \textbf{1.22k} & \textbf{65.67} & \textbf{+0} & 1.09 & 1.04 & \textbf{+0} & 1.09 & 1.04 \\
        neos17 &\textbf{3} & 22.07 & \textbf{16.59k} & \textbf{+0} & 1.17 & 1.06 & \textbf{+0} & \textbf{1.00} & 1.03 \\
        neos5 &\textbf{3} & 104.37 & 255.57k & \textbf{+0} & \textbf{0.92} & \textbf{0.96} & \textbf{+0} & 1.12 & 1.19 \\
        net12 &\textbf{3} & 1.34k & 2.07k & \textbf{+0} & \textbf{0.71} & \textbf{0.85} & \textbf{+0} & 0.73 & 0.88 \\
        netdiversion &\textbf{3} & \textbf{951.75} & \textbf{13.00} & -1 & 1.66 & 2.08 & \textbf{+0} & 2.30 & 3.96 \\
        nexp-150-20-8-5 &\textbf{2} & 2.35k & \textbf{3.60k} & \textbf{+0} & 1.34 & 2.18 & \textbf{+0} & \textbf{0.92} & 2.09 \\
        ns1208400 &\textbf{3} & 513.60 & \textbf{1.26k} & \textbf{+0} & 0.79 & 1.21 & \textbf{+0} & \textbf{0.79} & 1.21 \\
        ns1644855 &\textbf{2} & 2.09k & \textbf{8.00} & -1 & 1.01 & \textbf{1.00} & -1 & \textbf{0.99} & \textbf{1.00} \\
        ns1830653 &\textbf{3} & \textbf{115.40} & 6.23k & \textbf{+0} & 1.24 & 1.20 & \textbf{+0} & 1.23 & \textbf{0.93} \\
        ns1952667 &\textbf{3} & 1.56k & \textbf{3.19k} & \textbf{+0} & \textbf{0.53} & 1.16 & \textbf{+0} & 0.53 & 1.16 \\
        nu25-pr12 &\textbf{3} & 6.60 & 97.00 & \textbf{+0} & \textbf{0.99} & \textbf{0.88} & \textbf{+0} & 1.04 & 0.97 \\
        nursesched-sprint02 &\textbf{3} & \textbf{39.87} & 13.00 & \textbf{+0} & 1.02 & \textbf{0.97} & \textbf{+0} & 1.01 & \textbf{0.97} \\
        nw04 &\textbf{3} & \textbf{23.87} & \textbf{7.33} & \textbf{+0} & 1.01 & \textbf{1.00} & \textbf{+0} & 1.00 & \textbf{1.00} \\
        p200x1188c &\textbf{3} & 2.90 & \textbf{3.00} & \textbf{+0} & \textbf{0.99} & \textbf{1.00} & \textbf{+0} & \textbf{0.99} & \textbf{1.00} \\
        peg-solitaire-a3 &\textbf{1} & \textbf{4.09k} & \textbf{1.92k} & \textbf{+0} & 1.53 & 2.02 & \textbf{+0} & 1.52 & 2.02 \\
        pg &\textbf{3} & 20.47 & 552.67 & \textbf{+0} & \textbf{0.99} & 0.94 & \textbf{+0} & 0.99 & \textbf{0.92} \\
        pg5\_34 &\textbf{3} & 2.09k & 191.13k & \textbf{+0} & 0.84 & 0.98 & \textbf{+0} & \textbf{0.72} & \textbf{0.90} \\
        physiciansched6-2 &\textbf{1} & \textbf{165.20} & \textbf{151.00} & \textbf{+0} & 2.15 & 12.62 & \textbf{+0} & 1.52 & 7.28 \\
        piperout-08 &\textbf{3} & 858.30 & 604.00 & \textbf{+0} & \textbf{0.93} & 0.81 & \textbf{+0} & 1.05 & \textbf{0.62} \\
        piperout-27 &\textbf{3} & \textbf{320.93} & \textbf{174.00} & \textbf{+0} & 1.65 & 2.13 & \textbf{+0} & 1.49 & 1.17 \\
        pk1 &\textbf{3} & 149.93 & 386.87k & \textbf{+0} & 1.03 & 1.06 & \textbf{+0} & \textbf{0.96} & \textbf{0.96} \\
        qap10 &\textbf{3} & 108.17 & \textbf{3.33} & \textbf{+0} & \textbf{0.99} & \textbf{1.00} & \textbf{+0} & 1.03 & \textbf{1.00} \\
        rail507 &\textbf{3} & 255.57 & 1.32k & \textbf{+0} & 1.03 & 0.80 & \textbf{+0} & \textbf{0.79} & \textbf{0.64} \\
        ran14x18-disj-8 &\textbf{3} & 1.52k & 446.14k & \textbf{+0} & \textbf{0.67} & \textbf{0.55} & \textbf{+0} & 0.78 & 0.64 \\
        rd-rplusc-21 &0 & - & - & \textbf{+1} & - & - & +0 & - & - \\
        reblock115 &1 & 5.77k & \textbf{802.39k} & \textbf{+1} & \textbf{0.87} & 1.04 & +0 & 1.07 & 1.07 \\
        rmatr100-p10 &\textbf{3} & 174.13 & 857.67 & \textbf{+0} & 1.04 & 1.11 & \textbf{+0} & \textbf{1.00} & \textbf{0.97} \\
        rocI-4-11 &\textbf{3} & \textbf{727.85} & \textbf{13.22k} & -1 & 5.34 & 8.54 & \textbf{+0} & 2.51 & 2.56 \\
        rococoC10-001000 &\textbf{3} & 985.13 & 74.65k & \textbf{+0} & 0.89 & 1.30 & \textbf{+0} & \textbf{0.52} & \textbf{0.53} \\
        roi2alpha3n4 &\textbf{3} & \textbf{844.00} & \textbf{5.64k} & \textbf{+0} & 1.51 & 1.97 & \textbf{+0} & 1.21 & 1.30 \\
        roll3000 &\textbf{3} & 46.90 & 2.59k & \textbf{+0} & \textbf{0.75} & \textbf{0.61} & \textbf{+0} & 1.05 & 1.02 \\
        s250r10 &2 & 3.00k & 28.38k & \textbf{+1} & 1.41 & 1.58 & \textbf{+1} & \textbf{0.92} & \textbf{0.45} \\
        seymour1 &\textbf{3} & 60.80 & 1.49k & \textbf{+0} & \textbf{0.91} & \textbf{0.74} & \textbf{+0} & 1.00 & 0.77 \\
        sp98ar &0 & - & - & +0 & - & - & \textbf{+1} & - & - \\
        supportcase18 &\textbf{1} & - & - & -1 & - & - & -1 & - & - \\
        supportcase26 &1 & 6.16k & 7.60m & \textbf{+2} & \textbf{0.20} & \textbf{0.22} & +1 & 0.28 & 0.30 \\
        supportcase33 &\textbf{3} & 1.67k & 15.08k & \textbf{+0} & 1.23 & 1.20 & \textbf{+0} & \textbf{0.82} & \textbf{0.69} \\
        supportcase40 &\textbf{3} & 1.11k & 12.08k & \textbf{+0} & 1.05 & 1.05 & \textbf{+0} & \textbf{0.98} & \textbf{0.97} \\
        supportcase7 &\textbf{3} & 177.13 & \textbf{28.33} & \textbf{+0} & \textbf{1.00} & 1.05 & \textbf{+0} & 1.00 & 1.06 \\
        swath1 &\textbf{3} & 14.93 & 387.00 & \textbf{+0} & 1.03 & 1.05 & \textbf{+0} & \textbf{0.99} & \textbf{0.94} \\
        swath3 &\textbf{3} & 425.53 & 72.00k & \textbf{+0} & \textbf{0.33} & 0.32 & \textbf{+0} & 0.38 & \textbf{0.30} \\
        tbfp-network &\textbf{3} & \textbf{1.02k} & \textbf{51.00} & -1 & 3.36 & 6.81 & \textbf{+0} & 1.84 & 3.28 \\
        timtab1 &\textbf{3} & \textbf{52.40} & \textbf{39.40k} & \textbf{+0} & 1.22 & 1.33 & \textbf{+0} & 1.25 & 1.33 \\
        tr12-30 &\textbf{3} & 832.57 & 501.01k & \textbf{+0} & \textbf{0.73} & \textbf{0.72} & \textbf{+0} & 1.04 & 1.00 \\
        uct-subprob &\textbf{3} & 2.50k & 94.91k & \textbf{+0} & 0.78 & 0.81 & \textbf{+0} & \textbf{0.76} & \textbf{0.73} \\
        unitcal\_7 &\textbf{3} & \textbf{280.30} & 180.33 & \textbf{+0} & 1.02 & 0.96 & \textbf{+0} & 1.00 & \textbf{0.60} \\
        var-smallemery-m6j6 &0 & - & - & +0 & - & - & \textbf{+1} & - & - \\
        wachplan &\textbf{3} & \textbf{857.33} & \textbf{45.68k} & \textbf{+0} & 1.14 & 1.16 & \textbf{+0} & 1.13 & 1.10 \\
    \end{supertabular*}
  }

\end{document}